\documentclass[11pt]{amsart}
\usepackage[T1]{fontenc}
\usepackage{lmodern}
\usepackage[a4paper,margin=1in]{geometry}
\usepackage{amssymb,mathtools}
\usepackage{microtype}
\usepackage[colorlinks=true,linkcolor=blue,citecolor=blue,urlcolor=blue]{hyperref}

\numberwithin{equation}{section}
\newtheorem{theorem}{Theorem}[section]
\newtheorem{lemma}[theorem]{Lemma}
\newtheorem{proposition}[theorem]{Proposition}

\theoremstyle{remark}
\newtheorem{remark}[theorem]{Remark}

\newcommand{\CP}{\mathbb{CP}}
\newcommand{\C}{\mathbb{C}}
\newcommand{\R}{\mathbb{R}}
\newcommand{\TCl}{T_{\mathrm{Cl}}}
\newcommand{\ip}[2]{\langle #1,#2\rangle}
\DeclareMathOperator{\Hess}{Hess}
\DeclareMathOperator{\tr}{tr}
\DeclareMathOperator{\SU}{SU}
\DeclareMathOperator{\PU}{PU}
\DeclareMathOperator{\SO}{SO}
\renewcommand{\Re}{\operatorname{Re}}
\renewcommand{\Im}{\operatorname{Im}}
\allowdisplaybreaks[2]

\title[Minimal Lagrangian tori in $\CP^2$]{Uniqueness of embedded minimal Lagrangian tori in $\CP^2$}

\author{Yong Luo}
\address{Mathematical Science Research Center of Mathematics, Chongqing University of
Technology, Chongqing 400054, P.R. China}
\email{yongluo-math@cqut.edu.cn}

\author{Hui Ma}
\address{Department of Mathematical Sciences, Tsinghua University, Beijing 100084,
P.R. China}
\email{ma-h@tsinghua.edu.cn}

\author{Jiabin Yin}
\address{School of Mathematics and Statistics, Xinyang Normal University,
Xinyang 464000, P.R. China}
\email{jiabinyin@126.com}

\subjclass[2020]{Primary 53C42; Secondary 53D12, 53C24}
\keywords{Minimal Lagrangian tori, complex projective plane,
Clifford torus, two-point function}
\hypersetup{
 pdftitle={Uniqueness of embedded minimal Lagrangian tori in CP2},
 pdfauthor={Yong Luo, Hui Ma, Jiabin Yin}
}

\begin{document}

\begin{abstract}
We prove that any embedded minimal Lagrangian torus in the complex
projective plane is congruent to the Clifford torus.
The proof uses a two-point function constructed from normalized
horizontal lifts to the five-sphere.
\end{abstract}

\maketitle

\section{Introduction}

The rigidity of minimal submanifolds is a central theme in differential geometry. A landmark achievement is Brendle's proof of the Lawson conjecture for embedded minimal tori in the round three-sphere. Using a two-point function method, he established the following theorem.

\begin{theorem}[Brendle \cite{Brendle2013}]
Let $\Sigma$ be an embedded minimal torus in $\mathbb{S}^3$. Then $\Sigma$ is congruent to the Clifford torus $\mathbb{S}^1(\frac{\sqrt{2}}{2})\times\mathbb{S}^1(\frac{\sqrt{2}}{2})$.
\end{theorem}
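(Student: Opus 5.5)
We prove Brendle's theorem with a two-point maximum-principle argument. Let $F\colon\Sigma\to\mathbb{S}^3\subset\R^4$ be the embedding, $\nu$ a unit normal field, $h$ the second fundamental form, and $|A|$ its norm. Since $\Sigma$ is a minimal torus, $h$ is traceless, and the Hopf differential is holomorphic on a torus, hence has constant norm in suitable conformal coordinates. Two consequences follow. Either $|A|\equiv0$ everywhere, or $|A|>0$ everywhere. The first case would make $\Sigma$ totally geodesic, i.e.\ a great sphere, which is not a torus. So $|A|>0$, and Simons' identity gives
\[
\Delta|A| = \frac{|\nabla|A||^2}{|A|} + (2-|A|^2)|A|.
\]
Set $\kappa=\sup_\Sigma |A|/\sqrt2$ and define, for $x\neq y$,
\[
Z(x,y)=\kappa\bigl(1-\ip{F(x)}{F(y)}\bigr)+\ip{\nu(x)}{F(y)}.
\]
The first step is to show $Z\ge 0$. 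Since $\kappa\ge1$ when the surface is not totally geodesic, for large $\kappa$ the inequality $Z\ge0$ says that $\Sigma$ lies on one side of every sphere of curvature $\kappa$ tangent at $x$. Embeddedness (via compactness) gives $Z\ge0$ for $\kappa$ large. We then decrease $\kappa$ to the infimum $\bar\kappa$ with $Z\ge0$ and argue by contradiction assuming $\bar\kappa>\sup|A|/\sqrt2$.

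The core computation fixes a point $(\bar x,\bar y)$ with $Z=0$, chooses normal coordinates at both points, and computes $\sum_i \partial^2_{x_i}Z + 2\,\partial_{x_i}\partial_{y_i}Z+\partial^2_{y_i}Z$ at such a point. The vanishing of the first variations gives
\[
\ip{\nu(\bar x)}{e_i(\bar y)} = \frac{\ldots}{\ldots}
\]
relations, and the twisted pairing is obtained by choosing the frame at $\bar y$ as the reflection of the one at $\bar x$ across the bisecting hyperplane of $F(\bar x)$ and $F(\bar y)$. The result should be the inequality
\[
\sum(\cdots)\le -\frac{\ldots}{}\,(\ldots) + (\text{terms}\le0\text{ if }\kappa\ge |A|/\sqrt2),
\]
up to a term linear in the first derivatives of $Z$, with coefficients controlled by $\kappa - |A|/\sqrt2$. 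The boundary case $\bar x\to\bar y$ also has to be handled. There one expands $Z$ to second order and sees it is governed by $\kappa g - h\ge0$, which holds as long as $\kappa\ge |A|/\sqrt2$. The maximum principle, in Bony's strong form for degenerate elliptic inequalities, then shows that the zero set of $Z$ is open, so the minimum is either attained everywhere or only at the diagonal. Hence $\bar\kappa=\sup|A|/\sqrt2$, and $Z\ge0$ holds at that value.

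Next we extract rigidity. Using $Z\ge0$ with $\kappa=\sup|A|/\sqrt2$ and letting $y\to x$ along principal directions, the second-order expansion shows that the points where $|A|$ attains its maximum are precisely those where $\nabla h$ is controlled. The strong maximum principle applied to $Z$ near the diagonal forces $\nabla|A|=0$ at maxima, and propagation gives that $|A|$ is constant. Simons' identity then yields $|A|^2=2$ and $\nabla h=0$. A minimal surface in $\mathbb{S}^3$ with parallel second fundamental form and $|A|^2=2$ is the Clifford torus, by Lawson and Chern--do Carmo--Kobayashi.

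The main obstacle is the two-point computation at $(\bar x,\bar y)$ and the choice of frame at $\bar y$, which must make the cross terms cancel. I would first try the reflection choice above. I would also use the first-order conditions to eliminate all dependence on $\nabla h$ except through $\nabla|A|$, mimicking the weight $\kappa$ depending on $|A|$. If a constant $\kappa$ does not close the argument, I would replace it by a function $\kappa=|A|(x)/\sqrt2$ and use Simons' identity to absorb the extra gradient terms.
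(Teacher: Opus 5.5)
The paper does not prove this theorem; it only quotes Brendle. So the comparison is with Brendle's two-point argument. You reproduce its overall architecture: infimum of a multiplier, reflected frames, Bony's strict maximum principle, diagonal expansion, and Lawson's classification. The genuine gap is your choice of a \emph{constant} multiplier $\kappa$. Your core computation is left as placeholders, and when carried out it gives no contradiction.

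Suppose $Z\ge0$ vanishes at $(\bar x,\bar y)$ with $\bar x\ne\bar y$ and $\kappa>|A|(\bar x)/\sqrt2$. Put $s=1-\ip{F(\bar x)}{F(\bar y)}$. The $x$-criticality reads $(\kappa I-h)\,F(\bar y)^{T}=0$. Hence $F(\bar y)\perp T_{\bar x}\Sigma$, and the reflected frame is just $\bar e_i=e_i$. Using $\Delta F=-2F$, $\Delta\nu=-|A|^2\nu$ and $\ip{\nu(\bar x)}{F(\bar y)}=-\kappa s$, one finds
\[
 \sum_{i=1}^2\Hess Z\bigl((e_i,\bar e_i),(e_i,\bar e_i)\bigr)=\kappa\,s\,\bigl(|A|^2(\bar x)-2\bigr).
\]
Rotating the frame at $\bar y$, or weighting the $x$- and $y$-parts differently, only adds nonnegative terms. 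The hypothesis $\kappa\ge\sup|A|/\sqrt2$ gives no control of $|A|^2-2$. On a non-Clifford minimal torus, the integrated Simons identity $\int|\nabla A|^2=\int(|A|^2-2)|A|^2$ forces $|A|^2>2$ somewhere. So nothing prevents the touching point from lying where this term is positive, and then neither a contradiction nor Bony's principle is available. Note also that the Simons identity for $|A|$ that you set up never enters a constant-$\kappa$ computation.

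The missing idea is your fallback, and it has to be the main line. Use
\[
Z_\kappa(x,y)=\kappa\lambda(x)\bigl(1-\ip{F(x)}{F(y)}\bigr)+\ip{\nu(x)}{F(y)},\qquad \lambda=|A|/\sqrt2,
\]
and show that the least admissible $\kappa$ is at most $1$. With $\psi=\kappa\lambda$ and $w=F(\bar y)^T$, the same computation gives $s\Delta\psi-2\ip{\nabla\psi}{w}+(|A|^2-2)\psi s$, where $(\psi-h)w=s\nabla\psi$. Simons' identity $\Delta\lambda=|\nabla\lambda|^2/\lambda+(2-|A|^2)\lambda$ cancels the zeroth-order term and leaves
\[
 -\frac{\kappa s}{\lambda}\Bigl(\frac{\kappa+1}{\kappa-1}(\nabla_{e_1}\lambda)^2+\frac{\kappa-1}{\kappa+1}(\nabla_{e_2}\lambda)^2\Bigr)\le0\qquad(\kappa>1),
 \]
where $h(e_1,e_1)=\lambda$. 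Since this is only $\le0$, Bony's principle is what produces the contradiction.

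Your rigidity step fails for the same reason. With constant $\kappa=\max\lambda$, the second-order coefficient $\kappa-h(v,v)$ is positive wherever $|A|<\max|A|$, so the diagonal expansion says nothing there. The statement ``$\nabla|A|=0$ at maxima'' is automatic, and no propagation mechanism exists. With the weight at $\kappa=1$, that coefficient vanishes at every point along $e_1$. Hence the cubic coefficient $-\tfrac16(\nabla_{e_1}h)(e_1,e_1)$ must vanish everywhere. Repeating with $-\nu$ gives $(\nabla_{e_2}h)(e_2,e_2)=0$. Codazzi and tracelessness then give $\nabla h\equiv0$.

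This also explains why the paper's $\CP^2$ barrier $G-kD^2$ can use a constant $k$. That barrier involves only $F(x)$ and $F(y)$, so only $\Delta F=-2F$ enters. Your $Z$ contains $\nu(x)$, and $\Delta\nu=-|A|^2\nu$ brings in the curvature term that only a curvature-dependent weight can cancel.
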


Minimal Lagrangian submanifolds connect minimal submanifold theory with symplectic geometry.
The Lagrangian condition requires a submanifold to be half-dimensional and the ambient symplectic form to vanish on its tangent spaces.
Their study has close connections with variational problems for volume \cite{Oh1990,Oh1993,SchoenWolfson2001}
and calibrated geometry \cite{HarveyLawson1982}.

A basic example in $\CP^2$ is the Clifford torus
\[
T_{\mathrm{Cl}}
=\bigl\{[z_0:z_1:z_2]\in\CP^2:
         |z_0|=|z_1|=|z_2|\bigr\},
\]
which is flat, homogeneous, and minimal Lagrangian.
Immersed minimal Lagrangian tori in $\CP^2$ form a rich
class of examples, studied through equivariant constructions
\cite{CastroUrbano1994}, integrable systems
\cite{Sharipov1991,MaMa2005,McIntosh2003,
CarberryMcIntosh2004,Mironov2010}, and special Lagrangian
cones \cite{Joyce2008,Haskins2004}.
Dorfmeister and Ma developed a loop group (DPW) method for constructing minimal Lagrangian surfaces in $\CP^2$;
see \cite{DorfmeisterMa2021,DorfmeisterMa2025}
and the references therein.

The classification of embedded examples is a distinct
rigidity problem. In the embedded setting,
Oh \cite[p.~482]{Oh1994} conjectured that, up to congruence,
the standard real projective plane and the Clifford torus
are the only compact minimal Lagrangian surfaces in $\CP^2$.
We consider the torus case, namely whether the Clifford
torus is the unique embedded minimal Lagrangian torus
in $\CP^2$.
Ma presented this question as the Lagrangian version of
the Lawson conjecture in her 2012 Leuven lecture
\cite{Ma2012Leuven}.

Our proof adapts the two-point function method to a rigidity
problem in real codimension two. The key construction uses
normalized horizontal lifts to encode the Lagrangian geometry
in a scalar two-point function.

Every closed connected orientable Lagrangian surface
embedded in $\CP^2$ has genus one, by the self-intersection
formula \cite[Lemma~4.6]{EschenburgGuadalupeTribuzy1985}.
Thus the following theorem settles the orientable case of Oh's conjecture.

\begin{theorem}\label{thm:main}
Let $f:T^2\to\mathbb{CP}^2$ be a smooth minimal Lagrangian embedding.
Then there is $A\in\mathrm{PU}(3)$ such that
$f(T^2)=A(T_{\mathrm{Cl}})$.
\end{theorem}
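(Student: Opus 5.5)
We follow Brendle's strategy, working on a horizontal lift. Since $T^2$ is a torus and $f$ is minimal Lagrangian, the Maslov form vanishes. Hence $f$ admits a global Legendrian (horizontal) lift $F:\widetilde{T^2}\to\mathbb S^5\subset\C^3$, satisfying $\ip{dF}{iF}=0$. This lift is minimal in $\mathbb S^5$, so $\Delta F=-2F$. The lift is equivariant: along the deck group it changes by constant unit phases $e^{i\theta}$, and passing to a finite cover we may assume the phases lie in a finite cyclic group. The natural candidate for the two-point function measures the distance from $F(y)$ to the "tangent plane" of the lift at $F(x)$, but it has to be phase invariant so that it descends to $T^2\times T^2$.

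The quantities $|\ip{F(x)}{F(y)}|^2$ (the Hermitian inner product, which gives the Fubini--Study distance) and
\[
\bigl|\ip{dF_x(e_j)}{F(y)}\bigr|^2,\qquad j=1,2,
\]
are phase invariant. Let $\nu(x)=\frac{\sqrt 2}{\sqrt3}\,\kappa$ be the maximal normalized second fundamental form quantity, with $\kappa$ equal to $|h|$ up to a normalization fixed by the Clifford torus. We set
\[
Z(x,y)=\kappa\,\bigl(1-|\ip{F(x)}{F(y)}|^2\bigr)+\sqrt{\textstyle\sum_{j}|\ip{dF_x(e_j)}{F(y)}|^2}\cdot(\text{a sign}),
\]
adjusting the form so that two properties hold. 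First, $Z\ge0$ near the diagonal, by a Taylor expansion that uses $\kappa\ge$ the normal curvature in every direction; this is the analogue of Brendle's choice $\kappa=\max|A|$. Second, $Z\equiv0$ on $T_{\mathrm{Cl}}$. For the Clifford torus the cubic form $C=\ip{h(\cdot,\cdot)}{J\cdot}$ is the constant "$\cos3\theta$" form, which motivates taking $\kappa$ to be the maximum of $C(v,v,v)$ over unit $v$. Embeddedness (injectivity of $f$) guarantees that $1-|\ip{F(x)}{F(y)}|^2>0$ off the diagonal, so $Z$ is smooth there and the blow-up analysis happens only near the diagonal, exactly as in Brendle.

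Next we run the maximum principle. Suppose $\inf Z<0$, or alternatively take the smallest $\kappa$ scaling for which $Z\ge 0$ and look at a point of contact. At an off-diagonal minimum we compute $\sum_j \bigl(D^2Z\bigr)(e_j+\epsilon_j\tilde e_j,\;e_j+\epsilon_j\tilde e_j)$, choosing the frames at $x$ and $y$ related by a reflection, as Brendle does. The aim is the inequality $\Delta Z\le C(|\nabla Z|+Z)$ in a suitable sense. This uses $\Delta F=-2F$, the Lagrangian identity $\nabla^\perp=J\nabla$, the Gauss equation $K=1-\tfrac12|h|^2$, and the Simons-type inequality for $\kappa$: since $\kappa$ is the largest root of the cubic form, and the cubic differential $C$ is holomorphic for minimal Lagrangian surfaces, we get $\Delta\log\kappa\ge\dots$ away from the zeros of $C$. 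Then the strong maximum principle, together with the analysis at the diagonal (which reduces to showing $\kappa\ge$ the directional maximum), gives $Z\equiv0$. From $Z\equiv0$ we conclude that the cubic form has constant norm and that $\nabla h$ is controlled, so $|h|$ is constant. A minimal Lagrangian torus with constant $|h|$ and $K=0$ (Gauss–Bonnet combined with constancy) is the Clifford torus, by Ejiri's or Ludden–Okumura–Yano's classification of flat minimal Lagrangian surfaces with parallel second fundamental form.

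We expect the main obstacle to be the differential inequality. Unlike in $\mathbb S^3$, the codimension is two and the lift carries an extra phase direction $iF$. The second-variation terms coming from $\ip{dF}{F(y)}$ therefore include cross terms in $i\,dF$, and these must be absorbed. We would first try to exploit the fact that $i\,dF_x(e_j)=$ the normal vector $J e_j$, so that all normal data are again expressed through the tangent frame and the cubic form. We would then check whether the precise coefficients make the cross terms cancel after choosing the frame at $y$ to be the complex-linear reflection of that at $x$. If they do not cancel, we would modify $Z$ by adding a multiple of $|\Im\ip{F(x)}{F(y)}|^2$.

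The zeros of the holomorphic cubic differential, where $\kappa$ is only Lipschitz, also need care. We would handle them by working with the squared quantity $\kappa^2=\max C(v,v,v)^2$, which is smooth where $C\neq0$. Since $C$ is a holomorphic cubic differential on a torus, it is either nowhere zero or identically zero, and it cannot vanish identically because no totally geodesic torus exists.
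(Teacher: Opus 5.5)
Your argument breaks down at its first substantive step, the choice of two-point function, and the problem is not one of adjusting constants. Since the lift is horizontal and Lagrangian, $(F(x),E_1(x),E_2(x))$ with $E_j=dF_x(e_j)$ is a \emph{unitary} basis of $\C^3$: $\ip{F}{E_j}=0$, $\Re\ip{E_i}{E_j}=\delta_{ij}$ and $\Im\ip{E_i}{E_j}=f^*\omega(e_i,e_j)=0$. Expanding the unit vector $F(y)$ in this basis gives, identically,
\[
\sum_{j=1}^2\bigl|\ip{dF_x(e_j)}{F(y)}\bigr|^2=1-|\ip{F(x)}{F(y)}|^2=D(x,y).
\]
So your $Z$ is $\kappa D+\sigma\sqrt D$, where $\sigma=\pm1$ is the unspecified sign. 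This is a function of the Fubini--Study distance alone. With $\sigma=-1$ it is negative near the diagonal, where $D\approx t^2$. With $\sigma=+1$ it is trivially positive. In neither case can it vanish identically on $\TCl$, and it carries no information about how $F(y)$ sits relative to the tangent and normal directions at $x$. Any geometric content would have to come from the sign, and producing a well-defined, phase-invariant \emph{signed} quantity is exactly the difficulty. The modulus of the Hermitian product merges the tangential part $\Re\ip{E_j}{F(y)}$ with the normal part $\Im\ip{E_j}{F(y)}=\Re\ip{iE_j}{F(y)}$. That erases the information $\ip{\nu(x)}{y}$ carries in Brendle's function, so full $S^1$-invariance is too much invariance. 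Your fallback term $|\Im\ip{F(x)}{F(y)}|^2$ is not phase invariant, so it does not descend to $T^2\times T^2$. The later steps therefore have nothing to act on: the differential inequality, the Simons-type inequality for $\kappa$, the strong maximum principle, and the passage from $Z\equiv0$ to constant $|h|$. None of them is actually carried out.

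The missing idea sits in the step you passed over quickly. A horizontal lift on the universal cover exists for every Lagrangian immersion, because $f^*\omega=0$ makes the pulled-back Hopf connection flat. What minimality adds is that $\det\nolimits_{\mathbb{C}}(F,E_1,E_2)$ is constant, by Jacobi's formula and $\sum_jC(e_j,e_j,\cdot)=0$. Normalizing it to $1$ reduces the phase ambiguity of the lift to the cube roots of unity. Hence, with $a=\ip{F(x)}{F(y)}$, the quantity $a^3$, and not only $|a|^2$, is a well-defined function on $L\times L$; it keeps the phase information that $|a|^2$ discards. The paper works with $D=1-|a|^2$ and $G=1-3|a|^2+2\Re(a^3)=\prod_{j=0}^2(1+2\Re(\zeta^ja))$. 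It shows $G>0$ off the diagonal by a Hessian argument at a minimum of $\Re\ip{P}{Q}$ on the lifted surface, combined with embeddedness. It then proves the sharp bound $G\ge\frac34D^2$ using the barrier $G-kD^2$ and a critical-point Hessian identity along $(e_1,v_1)$ and $(e_2,-v_2)$, which is the reflection pairing you anticipated. Along $y=\exp_x(tv)$ the diagonal expansion $G/D^2=\frac34+\frac14\bigl(\frac12-C(v,v,v)^2\bigr)t^2+O(t^3)$ yields $C(v,v,v)^2\le\frac12$. The Gauss equation then gives $K\ge0$, Gauss--Bonnet gives $K\equiv0$, and the classification of flat minimal Lagrangian tori finishes the proof. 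No curvature-dependent $\kappa$, Simons identity, or strong maximum principle is needed.
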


We briefly outline the proof.
Write $L=T^2$ with the metric induced by $f$, and choose an orientation on $L$. 
We normalize a local horizontal lift $F$ by
$\det\nolimits_{\mathbb{C}}(F,E_1,E_2)=1$,
where $E_j=dF(e_j)$ and $(e_1,e_2)$ is a local positively oriented
orthonormal tangent frame. This normalization determines the local lift
up to multiplication by a cube root of unity. 

For $x,y\in L$, let $F(x)$ and $F(y)$ be values of normalized
local horizontal lifts near the respective points. Define
\begin{equation}\label{eq:intro-functions}
D(x,y)=1-|\langle F(x),F(y)\rangle|^2,\qquad 
G(x,y)=1-3|\langle F(x),F(y)\rangle|^2+2\operatorname{Re}\langle F(x),F(y)\rangle^3.
\end{equation}
These expressions are independent of the choices of normalized lifts and define smooth functions on $L\times L$. Embeddedness gives $D>0$ for $x\neq y$. The key point of the proof is the sharp global inequality
\[
\frac{G(x,y)}{D(x,y)^2}\ge \frac34,\qquad x\neq y.
\]
We first apply a critical-point Hessian identity to
the real part of the Hermitian product of normalized lifts.
Together with embeddedness, this gives $G>0$ off the diagonal.
The quotient extends continuously to the diagonal with value $3/4$, so by compactness its minimum $k$ is positive.
If $k<3/4$, a minimizing pair lies off the diagonal.
At this pair, the barrier $Z_k=G-kD^2$ has minimum zero.
A second application of the same Hessian identity gives a strictly negative sum of two Hessian evaluations, a contradiction. The required sign follows from an explicit decomposition of a quadratic polynomial in $D$ into nonnegative terms, one of which is strictly positive.

The diagonal expansion of the quotient then yields the bound
\[
C(v,v,v)^2\le \frac12,\qquad |v|=1,
\]
which, via the Gauss equation, implies $K\ge 0$. Since $L$ is a torus, the Gauss-Bonnet theorem forces $K\equiv 0$. The classical classification of flat minimal Lagrangian tori then shows that the surface is congruent to the Clifford torus.

The rest of this paper is organized as follows. Section~\ref{sec:Horizontal_lifts} constructs the normalized three-sheeted covering and derives the basic tensor identities and the diagonal expansion of the quotient. Section~\ref{sec:two-point} proves the sharp inequality by a two-point Hessian argument. Section~\ref{sec:rigidity} completes the proof of Theorem~\ref{thm:main}.

\section{Horizontal lifts and the cubic tensor}
\label{sec:Horizontal_lifts}
We recall the basic identities for minimal Lagrangian surfaces,
construct the normalized horizontal covering, and derive
the diagonal expansion needed in the two-point argument.

We use the Hermitian product
\[
 \ip{z}{w}=\sum_{\alpha=0}^{2}\overline{z_\alpha}w_\alpha
\]
on $\C^3$, which is conjugate-linear in the first variable. Its real
part is the Euclidean metric. We write $g$ for the Fubini--Study metric
of constant holomorphic sectional curvature $4$ and for the induced
metric on $L$. The complex structure is denoted by
$J$, and $\omega(X,Y)=g(JX,Y)$. The Laplacian convention is
$\Delta=\operatorname{div}\nabla$.

Let $f:L\to\CP^2$ be a minimal Lagrangian immersion of an oriented
surface, with second fundamental form $B$. Define
\begin{equation}\label{eq:cubic-definition}
 C(X,Y,Z)=g(B(X,Y),JZ),
\end{equation}
for tangent vectors $X,Y,Z$ at the same point of $L$.
The tensor $C$ is fully symmetric \cite{CastroUrbano1994}.
Let $(e_1,e_2)$ be a local positively oriented orthonormal frame,
and write $C_{ijk}=C(e_i,e_j,e_k)$ and $B_{ij}=B(e_i,e_j)$.
Minimality gives
\begin{equation}\label{eq:cubic-trace}
 \sum_{j=1}^{2} C(e_j,e_j,X)=0.
\end{equation}
The covariant derivative $\nabla C$ is also fully symmetric
by the Codazzi equation; see
\cite[p.~120, equations~(3.5)--(3.7)]{LiMaSu2008}.

We now record the component formulas in dimension two.
Set $c=C_{111}$ and $d=C_{112}$.
By symmetry and \eqref{eq:cubic-trace}, we have
\begin{equation}\label{eq:cubic-components}
 C_{111}=c,\qquad C_{112}=d,\qquad
 C_{122}=-c,\qquad C_{222}=-d.
\end{equation}
For $v=\cos\theta\,e_1+\sin\theta\,e_2$, 
\begin{equation}\label{eq:cubic-direction}
 C(v,v,v)=c\cos3\theta+d\sin3\theta.
\end{equation}
Taking the maximum over $\theta$ gives
\begin{equation}\label{eq:cubic-maximum}
 \max_{|v|=1}C(v,v,v)^2=c^2+d^2.
\end{equation}

Since $(Je_1,Je_2)$ is an orthonormal normal frame,
\[
 B_{11}=cJe_1+dJe_2,\qquad
 B_{12}=dJe_1-cJe_2,\qquad
 B_{22}=-cJe_1-dJe_2.
\]
The ambient sectional curvature of a Lagrangian tangent
plane is $1$. Thus the Gauss equation becomes
\begin{equation}\label{eq:gauss}
 K=1+g(B_{11},B_{22})-|B_{12}|^2=1-2(c^2+d^2).
\end{equation}

We next construct normalized horizontal lifts.
The local lift and its phase normalization are standard;
see \cite[Theorem~2.5 and Proposition~2.7]{DorfmeisterMa2021}.
We include the construction of the full normalized
three-sheeted covering, which need not be connected.

\begin{lemma}\label{lem:lift}
Let $f:L\to\CP^2$ be a minimal Lagrangian immersion of a closed oriented
surface. There is a possibly disconnected three-sheeted covering $\widehat L\to L$ and a
horizontal minimal Legendrian immersion $F:\widehat L\to S^5$ with
the following properties:
\begin{equation}\label{eq:normalized-frame}
 \mathcal F=(F,E_1,E_2)\in\SU(3),\qquad \Delta F=-2F.
\end{equation}
Here $\widehat L$ carries the pullback of the induced metric on $L$,
and $E_j=dF(e_j)$ for any local positively oriented orthonormal frame $(e_1,e_2)$ on $\widehat L$.
The surface $\widehat L$ is closed, and the covering admits a deck transformation $\tau$ satisfying
$F\circ\tau=\zeta F$, where $\zeta=e^{2\pi i/3}$.
If $f$ is an embedding, then $F$ is an embedding and its image meets each Hopf fiber over $L$
in exactly three points.
\end{lemma}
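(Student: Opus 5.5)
The plan is to build $\widehat L$ as the set of all normalized horizontal lifts at each point, and to read off the remaining properties from the structure equations.

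\emph{Construction of $\widehat L$.} Over a small disc $U\subset L$, start from any local horizontal lift $F_0:U\to S^5$ of $f$. Such a lift exists because the Lagrangian condition makes the pullback of the connection form $\Im\ip{F}{dF}$ closed. Any other horizontal lift is $e^{i\phi}F_0$ with $\phi$ constant. For a positively oriented orthonormal frame $(e_1,e_2)$ on $U$, the vectors $F_0,dF_0(e_1),dF_0(e_2)$ form a unitary basis of $\C^3$, since horizontality and the Lagrangian condition give Hermitian orthonormality. Hence $h=\det_\C(F_0,E_1,E_2)$ has modulus one. A rotation of the frame is an $\SO(2)$ change, so $h$ does not depend on the frame. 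Replacing $F_0$ by $e^{i\phi}F_0$ multiplies $h$ by $e^{3i\phi}$.

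The key step is to show that $h$ is locally constant. Differentiating and using the structure equations, the $\Im$ of the derivative of $\log h$ is $\tr$ of the connection $\mathfrak u(3)$-form. Its diagonal entries are $\ip{F}{dF}=0$ and $\ip{E_j}{dE_j}$. The tangential parts cancel by antisymmetry of the Levi-Civita connection. The normal parts contribute $i\sum_j g(B(\cdot,e_j),Je_j)=i\sum_j C(\cdot,e_j,e_j)=0$ by \eqref{eq:cubic-trace}. So minimality gives $dh=0$.

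With $h$ constant, there are exactly three choices $e^{i\phi}$ with $e^{3i\phi}h=1$, and these differ by powers of $\zeta$. Define
\[
\widehat L=\{(p,u)\;:\; p\in L,\ u\in S^5 \text{ is the value at } p \text{ of a normalized horizontal lift near } p\}.
\]
The local lifts give the charts, and these are compatible because normalized lifts on overlaps differ by a locally constant cube root of unity. So $\widehat L\to L$ is a three-sheeted covering, possibly disconnected, and it is closed since $L$ is. Set $F(p,u)=u$ and $\tau(p,u)=(p,\zeta u)$. Then $F\circ\tau=\zeta F$, and $\mathcal F\in\SU(3)$ by construction. Since $F$ is horizontal, it is an isometric immersion for the pulled-back metric.

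\emph{Minimality and $\Delta F=-2F$.} The Hopf projection restricted to horizontal directions is a Riemannian submersion, and horizontal lifts of minimal Lagrangians are minimal Legendrian; this is the standard fact cited in \cite{DorfmeisterMa2021}. So $F$ is minimal in $S^5$, and Takahashi's theorem gives $\Delta F=-2F$ in $\C^3=\R^6$.

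\emph{Embeddedness.} Suppose $f$ is an embedding. If $F(p,u)=F(p',u')$, then $f(p)=f(p')$ because both are the Hopf projection of the same point, so $p=p'$ and then $u=u'$. Thus $F$ is injective. It is also an immersion on a compact manifold, hence an embedding. At a point $f(p)$ of the image, the preimage in $F(\widehat L)$ is exactly the three normalized values $u,\zeta u,\zeta^2u$, which are distinct. Injectivity of $f$ rules out any other points over that fibre.

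I expect the main obstacle to be the verification that $d\det_\C\mathcal F=0$. It needs careful bookkeeping of the $\mathfrak u(3)$-valued connection form and the identification of its trace with the trace of $C$. Everything else in the argument is formal covering-space construction.
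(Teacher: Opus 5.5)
Your proof is correct and follows essentially the same route as the paper: $\det_\C(F,E_1,E_2)$ is constant because the cubic form is trace-free, there are exactly three normalized lifts differing by powers of $\zeta$, the deck transformation is $\tau(p,u)=(p,\zeta u)$, and $F$ is injective because $f$ is. The differences are cosmetic. The paper realizes $\widehat L$ as the level set $\Theta^{-1}(1)$ in the pullback bundle $f^*S^5$ instead of gluing graphs of local lifts. It obtains $\Delta F=-2F$ by tracing the frame equations, rather than citing minimality of horizontal lifts together with Takahashi's theorem.
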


\begin{proof}
Write $\pi:S^5\to\CP^2$ for the Hopf projection.
Since $f^*\omega=0$, the pullback Hopf connection is flat.
On a sufficiently small contractible neighborhood $V\subset L$,
choose a horizontal lift $F:V\to S^5$.
It is isometric because $\pi$ restricts to an isometry
on horizontal spaces. Since $\dim V=2$, it is Legendrian.

For a positively oriented orthonormal frame $e_1,e_2$ on $V$, put
$E_j=dF(e_j)$. Differentiating $|F|^2=1$ and using horizontality
gives $\ip{F}{E_j}=0$.
Similarly,
\[
 \Re\ip{E_i}{E_j}=\delta_{ij},\qquad
 \Im\ip{E_i}{E_j}=f^*\omega(e_i,e_j)=0.
\]
Thus $\mathcal F=(F,E_1,E_2)$ is a unitary frame.

Let $\overline D$ be the Euclidean connection on $\C^3$.
We derive the frame equations by resolving $\overline D_XE_j$
into the real orthonormal basis
$F,iF,E_1,E_2,iE_1,iE_2$. Its component along $F$ is
\[
 \Re\ip{\overline D_XE_j}{F}
   =-\Re\ip{E_j}{dF(X)}=-g(e_j,X).
\]
Its component along $iF$ is
\[
 \Re\ip{\overline D_XE_j}{iF}
   =-\Re\ip{E_j}{i\,dF(X)}
   =f^*\omega(e_j,X)=0.
\]
The horizontal component projects to
the ambient covariant derivative of $df(e_j)$ in $\CP^2$.
Its tangential part is therefore $dF(\nabla_Xe_j)$, and its horizontal
normal part is the lift of $B(X,e_j)$. Since $iE_\ell$ is the lift
of $Je_\ell$, the coefficient of $iE_\ell$ is
$g(B(X,e_j),Je_\ell)=C(X,e_j,e_\ell)$. We have proved
\begin{equation}\label{eq:real-frame-equations}
 \begin{split}
 \overline D_XF&=dF(X),\\
 \overline D_XE_j
 &=-g(X,e_j)F+dF(\nabla_Xe_j)
       +i\sum_{\ell=1}^{2}C(X,e_j,e_\ell)E_\ell.
 \end{split}
\end{equation}

Let $\Theta=\det\nolimits_{\mathbb{C}}\mathcal F$. By Jacobi's formula,
\eqref{eq:real-frame-equations}, and \eqref{eq:cubic-trace},
\[
 \Theta^{-1}X\Theta
 =\tr(\mathcal F^*\overline D_X\mathcal F)
 =i\sum_{j=1}^2 C(X,e_j,e_j)=0.
\]
Thus $\Theta$ is constant on $V$ and has modulus one.
Replacing $F$ by $uF$, where $u\in S^1$, changes
$\Theta$ to $u^3\Theta$. Exactly three choices of $u$ normalize the determinant to $1$. This is the real orthonormal frame version of the normalization in \cite[Proposition~2.7]{DorfmeisterMa2021}.

We now construct the covering globally. For $(x,z)\in f^*S^5$,
take any positively oriented orthonormal basis of $T_xL$ and lift its image
under $df$ horizontally to $z$, obtaining $E_1,E_2$. Define
$\Theta(x,z)=\det\nolimits_{\mathbb{C}}(z,E_1,E_2)$.
A change of positively oriented orthonormal basis acts by a matrix in
$\SO(2)$ on the last two columns. It does not change this
determinant, so $\Theta$ is well-defined and smooth. The horizontal
lift of $df(e_j)$ at $e^{i\theta}z$ is $e^{i\theta}E_j$, and hence
\[
 \Theta(x,e^{i\theta}z)=e^{3i\theta}\Theta(x,z).
\]
Set
\[
 \widehat L=\{(x,z)\in f^*S^5:\Theta(x,z)=1\}.
\]
Over a connected neighborhood $V$ equipped with a horizontal lift, choose one determinant-normalized
section $F_V$. In the resulting trivialization of $f^*S^5$, the
condition $\Theta=1$ is exactly $u^3=1$. Thus the inverse image of
$V$ in $\widehat L$ is the disjoint union of the three smooth
graphs $F_V,\zeta F_V,\zeta^2F_V$. This proves that
$\widehat L\to L$ is a smooth three-sheeted covering. These same
graphs show that the global map $F(x,z)=z$ is horizontal and
isometric. The transformation
$\tau(x,z)=(x,\zeta z)$ permutes the sheets, has order three, and
satisfies $F\circ\tau=\zeta F$. Orient $\widehat L$ by pulling back
the given orientation of $L$. Then every positively oriented orthonormal lifted frame has
determinant $1$.

Taking the trace in \eqref{eq:real-frame-equations}
and using \eqref{eq:cubic-trace}, we obtain
\[
 \Delta F=-2F+i\sum_{\ell=1}^2
 \left(\sum_{j=1}^2 C(e_j,e_j,e_\ell)\right)E_\ell=-2F.
\]
Since $\Delta F=-2F+2H^{S^5}$, where $H^{S^5}$ is the
mean curvature vector in $S^5$, the lift $F$ is minimal.

The pullback bundle is a closed subset of the compact space
$L\times S^5$, and $\widehat L=\Theta^{-1}(1)$ is closed in that
bundle. Thus the covering is compact; its local sheets show that
it has no boundary. Finally, suppose $f$ is injective and
$F(x,z)=F(y,w)$. Then $z=w$, so
$f(x)=\pi(z)=\pi(w)=f(y)$ and $x=y$. Hence $(x,z)=(y,w)$.
The map $F$ is an injective immersion of a compact manifold into
$S^5$, and is consequently an embedding. For each $x$, the equation
$\Theta(x,z)=1$ has exactly the three solutions already described.
Injectivity of $f$ prevents any additional points over another base
point from lying in the same Hopf fiber. This proves the last
assertion.
\end{proof}

For the rest of the paper, $L$ is closed and $f$ is an embedding.
Write $\Gamma=F(\widehat L)\subset S^5$. When $F(x)$ denotes a lift of a
point $x\in L$, a normalized local sheet is understood. 
Set $a(x,y)=\ip{F(x)}{F(y)}$.
By Lemma~\ref{lem:lift}, changing the normalized lifts multiplies $a$ by a cube root of unity. 
Thus $|a|^2$ and $a^3$ are well-defined smooth functions on $L\times L$, as are $D,G$ in \eqref{eq:intro-functions}.
The Cauchy--Schwarz inequality gives $|a|\leq1$, and equality holds
exactly when the two unit lifts differ by a unit complex scalar,
or equivalently when their projective points agree. Embeddedness
then gives $D>0$ whenever $x\ne y$. On the diagonal the two
normalized lifts differ by a third root of unity, so $|a|^2=a^3=1$.
Thus
\begin{equation}\label{eq:D-positive}
 D(x,y)>0\quad\text{if }x\ne y,\qquad D(x,x)=G(x,x)=0.
\end{equation}

\begin{lemma}\label{lem:diagonal}
The quotient $q=G/D^2$, defined away from the diagonal of $L\times L$,
extends continuously to the diagonal with value $3/4$. For every
$x\in L$ and unit $v\in T_xL$,
\begin{equation}\label{eq:diagonal-expansion}
 q(x,\exp_x(tv))
 =\frac34+\frac14\left(\frac12-C_x(v,v,v)^2\right)t^2+O(t^3).
\end{equation}
The remainder is uniform on the unit tangent bundle of $L$.
\end{lemma}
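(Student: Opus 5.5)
The plan is to reduce everything to the one-variable function $a(t)=\ip{F(x)}{F(\gamma(t))}$, where $\gamma(t)=\exp_x(tv)$ and $F$ is a normalized local sheet along $\gamma$ with $a(0)=1$, and to Taylor expand it using the frame equations \eqref{eq:real-frame-equations}.

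\textbf{Step 1: rewrite $D$ and $G$ in polar form.} Write $a=re^{i\varphi}$ with $r=|a|$ and $\varphi$ small. Then $D=(1-r)(1+r)$. Using $1-3r^2+2r^3=(1-r)^2(1+2r)$ we get
\[
 G=(1-r)^2(1+2r)-4r^3\sin^2\!\bigl(\tfrac{3\varphi}{2}\bigr),
\]
and therefore
\[
 q=\frac{1+2r}{(1+r)^2}-\frac{4r^3\sin^2(3\varphi/2)}{(1-r)^2(1+r)^2}.
\]
The first term tends to $3/4$ as $r\to1$. Writing $r=1-\epsilon$, it equals $\frac34-\frac{\epsilon^2}{16}+O(\epsilon^3)$; the linear term in $\epsilon$ cancels. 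So it suffices to prove two expansions:
\[
 1-r=\tfrac{t^2}{2}+O(t^4),\qquad \varphi=\tfrac{c_v}{6}\,t^3+O(t^4),\qquad c_v:=C_x(v,v,v).
\]

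\textbf{Step 2: combine the expansions.} Since $\epsilon\sim t^2/2$, the first term of $q$ is $\frac34-\frac{t^4}{64}+\dots$, which is too high order to matter. The second term is $\frac{9\varphi^2}{4\epsilon^2}(1+O(t))\approx \frac{9c_v^2t^6/36}{t^4}=\frac{c_v^2t^2}{4}$, and this matches the $-\tfrac14 C^2t^2$ contribution in \eqref{eq:diagonal-expansion}. The sign convention, however, needs rechecking against the sign of the $\sin^2$ term. The $+\frac18t^2$ contribution must then come from a correct $t^4$ term of $1-r$. This means Step 1 is too crude: I actually need $1-r$ to order $t^4$ and $\varphi$ to order $t^4$, and must keep the $\epsilon$-linear corrections $(1+2r)/(1+r)^2$ expanded to order $\epsilon^2$ with $\epsilon$ known through $t^4$. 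Concretely, I will compute $\Re a$ through order $t^4$ and $\Im a$ through order $t^4$, which requires $F^{(k)}(0)$ for $k\le4$ paired against $F(x)$.

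\textbf{Step 3: the derivatives from the frame equations.} Along the geodesic put $T=dF(\gamma')$. Then $F'=T$ and
\[
 T'=-F+iW,\qquad W=\sum_\ell C(\gamma',\gamma',e_\ell)E_\ell.
\]
Here $W$ is horizontal, so $\ip{F}{W}=0$. Differentiating these relations and repeatedly using $\ip{F}{T}=\ip{F}{W}=0$ gives:
\begin{itemize}
 \item $\ip{F_0}{F'''(0)}=-\ip{F_0}{T}+i\ip{F_0}{W'}$, and $\ip{F_0}{W'}=-\ip{T}{W}=-ic_v$;
 \item $\ip{F_0}{F''''(0)}$, which involves $|W|^2=c^2+d^2$, the quantity $c_v^2$, and $\nabla C$.
\end{itemize}
The $\nabla C$ terms should enter only $\Im a$ at order $t^4$, or through the symmetric combination of $\nabla C$, and hence drop out of $|a|$ at that order. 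I would verify this using the full symmetry of $\nabla C$ and the trace condition \eqref{eq:cubic-trace}. Substituting into Step 1, the identity $c^2+d^2$ versus $c_v^2$ has to combine into $\frac14\bigl(\frac12-c_v^2\bigr)$, where the $\frac12$ comes from the pure sphere geometry (the case $C=0$, as for a totally geodesic $\R P^2$-type lift).

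\textbf{Step 4: uniformity and continuity.} All Taylor coefficients are smooth in $(x,v)$ on the compact unit tangent bundle, so the $O(t^3)$ remainder is uniform there. Pairs near the diagonal are exactly pairs of the form $(x,\exp_x(tv))$ with $t$ small, so uniformity gives the continuous extension of $q$ with value $3/4$ on the diagonal.

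\textbf{Main obstacle.} The main obstacle is the order-$t^4$ bookkeeping of $\ip{F_0}{F^{(4)}(0)}$, and showing that the $\nabla C$ and $c^2+d^2$ contributions combine so that only $c_v^2$ survives, with the stated coefficient. I would first test the computation on the Clifford torus lift (where $c^2+d^2=\tfrac12$) and on $C\equiv0$ to pin down the constants.
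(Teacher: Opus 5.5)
Your polar decomposition is correct, but Step~1 contains a false expansion, and Steps~2--3 are built on it. Since $\frac{d}{dr}\frac{1+2r}{(1+r)^2}=-\frac{2r}{(1+r)^3}$, which equals $-\frac14$ at $r=1$, the linear term in $\epsilon=1-r$ does not cancel. In fact
\[
 \frac{1+2r}{(1+r)^2}-\frac34=\frac{(1-r)(1+3r)}{4(1+r)^2}
 =\frac{\epsilon}{4}+\frac{\epsilon^2}{16}+O(\epsilon^3).
\]
Your own test case $C\equiv0$ already shows this: there $a=\cos t$ along the geodesic, $q$ is exactly this radial term, and it has a $t^2/8$ term. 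So the radial term is $\frac34+\frac{t^2}{8}+O(t^4)$, not $\frac34-\frac{t^4}{64}+\cdots$, and its linear term is precisely the $\frac14\cdot\frac12\,t^2$ you were missing.

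Your plan to recover that term from order-$t^4$ data therefore looks for a cancellation that does not exist. Along the geodesic one finds $a^{(4)}(0)=1+(c^2+d^2)-2i(\nabla_vC)(v,v,v)$. Hence $c^2+d^2$ enters $1-r$ only at order $t^4$, and so enters $q$ only at order $t^4$. The $t^2$ coefficient depends on $C_x(v,v,v)$ alone and cannot arise from ``combining'' $c^2+d^2$ with $c_v^2$. As written, the proposal never produces the $\frac18t^2$ term, and the main obstacle you name is not a real obstacle.

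Once this is corrected, only the third-order expansion is needed, exactly as in the paper. There is also a slip in Step~3: $\ip{T}{W}=c_v$ is real, so $\ip{F_0}{W'(0)}=-c_v$ and $a'''(0)=-ic_v$. With your extra factor $i$, $a'''(0)$ would be real and the angular term would vanish. The correct expansion $a(t)=1-\frac{t^2}{2}-\frac{i}{6}c_vt^3+O(t^4)$ gives $\epsilon=\frac{t^2}{2}+O(t^4)$ and $\varphi=-\frac{c_v}{6}t^3+O(t^4)$; your sign on $\varphi$ is off, but harmlessly. The radial term then contributes $\frac34+\frac{t^2}{8}$. The angular term enters $q$ with a minus sign and contributes $-\frac{c_v^2}{4}t^2+O(t^3)$, as you computed, so there is no sign problem.

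With this fix your argument is a valid alternative to the paper's bookkeeping. The paper uses the Cartesian identity $4G-3D^2=(1-X)^3(1+3X)-6(1+4X+X^2)Y^2-3Y^4$, whose two pieces play the roles of your radial and angular terms. Your polar form makes the structure slightly more transparent: a function of $D$ alone minus a nonnegative phase penalty. The Cartesian form avoids introducing $\varphi=\arg a$. Your Step~4 matches the paper's uniformity argument.
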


\begin{proof}
Fix $x$ and a unit vector $v\in T_xL$, and let
$\gamma(t)=\exp_x(tv)$. On a sufficiently short interval, choose
a normalized horizontal lift over a neighborhood of this geodesic.
Choose a parallel positively oriented orthonormal frame $(e_1,e_2)$ along $\gamma$ with $e_1=\dot\gamma$. Write $F(t)$ for the lifted curve and
$E_j(t)=dF(e_j(t))$. Put
\[
 c(t)=C(e_1,e_1,e_1),\qquad d(t)=C(e_1,e_1,e_2).
\]
With $X=e_1$, the tangential connection terms in
\eqref{eq:real-frame-equations} vanish. Using
\eqref{eq:cubic-components}, we obtain
\begin{equation}\label{eq:geodesic-frame}
 \begin{split}
 F'&=E_1,\\
 E_1'&=-F+icE_1+idE_2,\\
 E_2'&=idE_1-icE_2.
 \end{split}
\end{equation}

Let $P=F(0)$ and $a(t)=\ip{P}{F(t)}$. By \eqref{eq:geodesic-frame} and unitarity at $t=0$,
\[
 a(0)=1,\qquad a'(0)=0,\qquad
 a''(0)=-1,\qquad a'''(0)=-ic(0).
\]
The last identity follows from the $F$-component
$-icF$ of $F'''$. Taylor's formula gives
\[
 a(t)=1-\frac12t^2-\frac{i}{6}c(0)t^3+O(t^4).
\]
Writing $a=X+iY$, we obtain
\begin{equation}\label{eq:a-expansion}
 X=1-\frac12t^2+O(t^4),\qquad
 Y=-\frac16c(0)t^3+O(t^4),\qquad
 D=t^2+O(t^4).
\end{equation}
The last expansion follows by substituting the first two into
$D=1-X^2-Y^2$, in particular $X^2=1-t^2+O(t^4)$ and $Y^2=O(t^6)$.

To expand $q-3/4$, we use the identity
\begin{equation}\label{eq:diagonal-polynomial}
 \begin{split}
 4G-3D^2
 &=1-6|a|^2+8\Re(a^3)-3|a|^4\\
 &=(1-X)^3(1+3X)
   -6(1+4X+X^2)Y^2-3Y^4.
 \end{split}
\end{equation}
Substituting \eqref{eq:a-expansion} into
\eqref{eq:diagonal-polynomial} gives
\[
 4G-3D^2=\left(\frac12-c(0)^2\right)t^6+O(t^7).
\]
Since $D=t^2(1+O(t^2))$, for sufficiently small $t\ne0$,
\[
 q(x,\gamma(t))-\frac34
 =\frac{4G-3D^2}{4D^2}
 =\frac14\left(\frac12-c(0)^2\right)t^2+O(t^3).
\]
As $c(0)=C_x(v,v,v)$, this proves \eqref{eq:diagonal-expansion}.

We finish by explaining continuity when the first point also varies.
Compactness of $L$ gives a positive injectivity radius. Smoothness
of the cubic tensor and compactness of the unit tangent bundle give
uniform bounds for $c,d$ and their derivatives along all sufficiently
short unit-speed geodesics. The linear system
\eqref{eq:geodesic-frame} then bounds the derivatives of $F,E_1,E_2$
needed in Taylor's formula uniformly. Consequently, all the
remainders above are uniform in $(x,v)$.
In particular, for some fixed $\varepsilon>0$ and $A>0$,
\[
 \left|q(x,\exp_x(tv))-\frac34\right|\leq A t^2
 \quad\text{whenever }|v|=1,\quad 0<|t|<\varepsilon.
\]
Every sufficiently close distinct pair $(x,y)$ has the form
$y=\exp_x(tv)$ with $t=\operatorname{dist}_L(x,y)>0$ and $|v|=1$.
The displayed estimate therefore proves continuity across the
entire diagonal. No differentiability of the extended quotient at
the diagonal is needed later.
\end{proof}

\section{Two-point inequalities}\label{sec:two-point}

We first prove a critical-point Hessian identity for the general composite function $\mathcal Z_\psi$. Its applications to $\mathcal Z_{\psi_h}=h$ and $\mathcal Z_{\psi_k}=Z_k$ yield, respectively, strict positivity of $G$ and the sharp quotient estimate.

Complex derivatives below are Wirtinger derivatives. We write $\psi_a$,
$\psi_{aa}$, and $\psi_{a\bar a}$ for the derivatives of the outer
function evaluated at the complex number $a=a(x,y)$. Thus
$d\mathcal Z_\psi=2\Re(\psi_a\,da)$. When the outer function is $\psi_k$,
we write its derivatives as $(\psi_k)_a$, $(\psi_k)_{aa}$, and
$(\psi_k)_{a\bar a}$, always with $k$ fixed.

\begin{lemma}\label{lem:contact}
Let $F_1,F_2$ be normalized horizontal minimal Legendrian immersions
defined near points $p,q$ of oriented surfaces. Set
\[
 a(x,y)=\ip{F_1(x)}{F_2(y)},\qquad \mathcal Z_\psi=\psi(a,\bar a),
\]
where $\psi$ is a real-valued $C^2$ function near $a(p,q)$. Suppose that,
at $(p,q)$,
\[
 d\mathcal Z_\psi=0,\qquad D=1-|a|^2>0,\qquad \xi=\psi_a\ne0.
\]
There are positively oriented orthonormal tangent bases $e_1,e_2$ and $v_1,v_2$
such that, for $W_1=(e_1,v_1)$ and $W_2=(e_2,-v_2)$,
\begin{equation}\label{eq:contact-trace}
 \begin{split}
 \mathcal L\mathcal Z_\psi
 &:={}\Hess \mathcal Z_\psi(W_1,W_1)+\Hess \mathcal Z_\psi(W_2,W_2)\\
 &=-4\Re(a\xi)-4\Re\frac{\xi^2}{\bar\xi}
    +8D\left(\psi_{a\bar a}
      -\Re\left(\psi_{aa}\frac{\bar\xi}{\xi}\right)\right).
 \end{split}
\end{equation}
Here normalized means that both positively oriented orthonormal lifted frames
have complex determinant $1$. All quantities in the identity are evaluated at the critical point. The notation $\mathcal L\mathcal Z_\psi$ denotes 
this pointwise Hessian sum, not a globally defined operator. Neither
$\mathcal Z_\psi=0$ nor a minimum assumption is part of the lemma.
In each minimum application below, nonnegativity of the Hessian supplies
$\mathcal L\mathcal Z_\psi\geq0$.
\end{lemma}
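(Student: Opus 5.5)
The plan is to compute the Hessian of $\mathcal Z_\psi=\psi(a,\bar a)$ by the chain rule, so that everything reduces to the first and second derivatives of $a$. First I will choose frames adapted to the critical point. Then I will use the frame equations \eqref{eq:real-frame-equations} together with the fact that the $3\times 3$ matrix of Hermitian products between the two normalized frames lies in $\SU(3)$.

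\textbf{Chain rule and frame choice.} At $(p,q)$, using geodesic normal frames so that the tangential connection terms vanish, the chain rule gives
\[
\Hess\mathcal Z_\psi(W,W)=2\Re\bigl(\xi\,\Hess a(W,W)\bigr)+2\Re\bigl(\psi_{aa}\,da(W)^2\bigr)+2\psi_{a\bar a}|da(W)|^2 .
\]
Write $\mathcal F_1=(F_1,E_1,E_2)$ and $\mathcal F_2=(F_2,V_1,V_2)$, and let $U=\mathcal F_1^*\mathcal F_2\in\SU(3)$. Its entries are $a$, $u_j=\ip{E_j}{F_2}$, $w_k=\ip{F_1}{V_k}$ and $M_{jk}=\ip{E_j}{V_k}$. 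Since $da(e_j,0)=u_j$ and $da(0,v_k)=w_k$, criticality $d\mathcal Z_\psi=0$ says that $\xi u_j$ and $\xi w_k$ are purely imaginary.

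I will rotate $(e_1,e_2)$ by an element of $\SO(2)$, which keeps the determinant equal to $1$, so that $u_2=0$, and similarly arrange $w_2=0$. Unitarity of the first row and of the first column of $U$ then gives $|u_1|^2=|w_1|^2=1-|a|^2=D$. Write $u_1=is/\xi$ and $w_1=it/\xi$ with $s,t$ real and $|s|=|t|=|\xi|\sqrt D$. Replacing $(v_1,v_2)$ by $(-v_1,-v_2)$ if necessary, I can arrange $t=s$, so that $da(W_1)=u_1+w_1=2u_1$ and $da(W_2)=u_2-w_2=0$. Hence
\[
da(W_1)^2=4u_1^2=-4D\,\bar\xi/\xi,\qquad |da(W_1)|^2=4D .
\]
These produce exactly the term $8D\bigl(\psi_{a\bar a}-\Re(\psi_{aa}\bar\xi/\xi)\bigr)$ in \eqref{eq:contact-trace}. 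I should double-check that the sign flip of the $v$-frame is compatible with the sign on $v_2$ in $W_2$. Flipping both $v_1$ and $v_2$ preserves orientation, so this is fine.

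\textbf{Second derivatives of $a$.} By \eqref{eq:real-frame-equations},
\[
\Hess a\bigl((e_j,0),(e_j,0)\bigr)=-a-i\sum_\ell C^{(1)}_{jj\ell}\,u_\ell ,
\]
where the $i$ appears conjugated because the product is conjugate-linear in the first slot. There is an analogous formula in the $y$-variable, and the mixed term is $2\ip{E_j}{V_k}$. Summing over $W_1$ and $W_2$, the cubic terms cancel by the trace condition \eqref{eq:cubic-trace}. The sign $-v_2$ enters only the mixed term, so
\[
\Hess a(W_1,W_1)+\Hess a(W_2,W_2)=-4a+2(M_{11}-M_{22}).
\]
The term $-4a$ contributes $-4\Re(a\xi)$.

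\textbf{The main obstacle.} It remains to show
\[
4\Re\bigl(\xi(M_{11}-M_{22})\bigr)=-4\Re(\xi^2/\bar\xi),
\]
and this is where the normalization $\det=1$ is essential. For $U\in\SU(3)$, each entry equals the complex conjugate of its cofactor. With $u_2=w_2=0$ this gives
\[
\overline{M_{11}}=aM_{22},\qquad \overline{M_{22}}=aM_{11}-u_1w_1=aM_{11}+D\,\bar\xi/\xi .
\]
Eliminating between these two relations, together with $|M_{11}|^2=|a|^2$ from unitarity of the rows, should express $M_{11}-M_{22}$ in terms of $a$, $D$ and $\bar\xi/\xi$. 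I would first try solving the linear system for $M_{22}$: substituting the first relation into the second gives $(1-|a|^2)\overline{M_{22}}=D\,\bar\xi/\xi$, so $M_{22}=\xi/\bar\xi$ and $M_{11}=\bar a\,\bar\xi/\xi$. If this is right, the claim reduces to checking that $\Re(\xi M_{11})=\Re(\bar a\,\bar\xi^2/\xi)$ combines correctly with the $-4\Re(a\xi)$ term. That bookkeeping is where sign errors are most likely, so I would verify it against the Clifford torus, where everything is explicit.
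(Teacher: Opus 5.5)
Your argument is correct and is essentially the paper's proof. The same ingredients appear: the relative matrix $U=\mathcal F_1^*\mathcal F_2\in\SU(3)$, frame rotations giving $U_{20}=U_{02}=0$ and $U_{10}=U_{01}$, the Laplacian identities $\Delta_xa=\Delta_ya=-2a$ (your cancellation of the cubic terms by the trace condition), and the mixed terms $2(U_{11}-U_{22})$. Your adjugate identity $U^*=\operatorname{adj}U$ is a slightly leaner route to $U_{11}=\bar a\,\bar\xi/\xi$ and $U_{22}=\xi/\bar\xi$ than the paper's row/column orthogonality plus determinant, because it never needs $U_{12}=U_{21}=0$. Your values agree with \eqref{eq:critical-matrix}.

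Fix the final bookkeeping, which as written contains two errors that happen to cancel.

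\textbf{The $-4a$ term.} It contributes
\[
2\Re\bigl(\xi\cdot(-4a)\bigr)=-8\Re(a\xi),
\]
not $-4\Re(a\xi)$.

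\textbf{The $M_{11}$ term.} You have $\xi M_{11}=\bar a\,\bar\xi$. Your expression $\bar a\,\bar\xi^2/\xi$ is $\bar\xi M_{11}$, not $\xi M_{11}$. Hence $4\Re(\xi M_{11})=4\Re(a\xi)$.

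\textbf{Correct assembly.} The identity you say ``remains to show'' is false as stated. In fact
\[
4\Re\bigl(\xi(M_{11}-M_{22})\bigr)=4\Re(a\xi)-4\Re\frac{\xi^2}{\bar\xi}.
\]
Adding $-8\Re(a\xi)$ gives exactly $-4\Re(a\xi)-4\Re(\xi^2/\bar\xi)$, which is the first part of \eqref{eq:contact-trace}. Your $\psi_{aa}$ and $\psi_{a\bar a}$ contributions are already right.
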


\begin{proof}
Write $P=F_1(p)$, $Q=F_2(q)$, $E_j=dF_1(e_j)$, and
$V_j=dF_2(v_j)$, starting with any positively oriented orthonormal tangent
bases. The relative matrix is
\begin{equation}\label{eq:relative-frame}
 U=(P,E_1,E_2)^*(Q,V_1,V_2).
\end{equation}
Both frame matrices belong to $\SU(3)$, so $U^*U=UU^*=I$ and
$\det U=1$. Index the rows and columns by $0,1,2$. Differentiating with respect to the real coordinates in each factor gives
\[
 \partial_{x_i}\ip{F_1(x)}{F_2(y)}\big|_{(p,q)}=\ip{E_i}{Q},
 \qquad
 \partial_{y_j}\ip{F_1(x)}{F_2(y)}\big|_{(p,q)}=\ip{P}{V_j}.
\]
In terms of $U$, we obtain
\begin{equation}\label{eq:a-first-derivatives}
 a_{x_i}=U_{i0},\qquad a_{y_j}=U_{0j}.
\end{equation}
In particular, no conjugation of $U_{i0}$ is introduced by taking
the real $x_i$ derivative, and the conjugation of the first vector is
already part of the Hermitian product defining that entry.

Since $\xi\ne0$, choose the unit complex number
$e^{i\beta}=i\bar\xi/|\xi|$. It satisfies
\begin{equation}\label{eq:beta}
 \xi e^{i\beta}=i|\xi|,
 \qquad e^{2i\beta}=-\frac{\bar\xi}{\xi}.
\end{equation}
The critical point condition gives
\[
 0=\partial_{x_i}\mathcal Z_\psi=2\Re(\xi U_{i0}),\qquad
 0=\partial_{y_j}\mathcal Z_\psi=2\Re(\xi U_{0j}).
\]
For any complex number $z$, the condition $\Re(\xi z)=0$ says that
$\xi z$ is purely imaginary, or equivalently that
$z\in e^{i\beta}\R$. Hence all four entries $U_{i0},U_{0j}$ with
$i,j\in\{1,2\}$ belong to this same real line in $\C$.
Unitarity of the first column and first row gives
\[
 |U_{10}|^2+|U_{20}|^2=1-|a|^2=D,
 \qquad |U_{01}|^2+|U_{02}|^2=D.
\]
After factoring out $e^{i\beta}$, these are two nonzero real
vectors of length $\sqrt D$. An orientation-preserving rotation
of $(e_1,e_2)$ rotates the first vector, and an independent such
rotation of $(v_1,v_2)$ rotates the second. Since $\SO(2)$ acts
transitively on each circle, we may arrange
\[
 U_{10}=U_{01}=\sqrt D\,e^{i\beta},\qquad U_{20}=U_{02}=0.
\]
The frame determinants remain $1$ under these rotations.

The remaining entries follow from unitarity and the determinant.
Orthogonality of columns $0$ and $2$ reads
$\sqrt D\,e^{-i\beta}U_{12}=0$, so $U_{12}=0$.
Orthogonality of rows $0$ and $2$ reads
$\sqrt D\,e^{i\beta}\overline{U_{21}}=0$, so $U_{21}=0$.
Orthogonality of columns $0$ and $1$ then reads
\[
 \bar a\sqrt D\,e^{i\beta}
       +\sqrt D\,e^{-i\beta}U_{11}=0,
\]
and gives $U_{11}=-\bar a e^{2i\beta}$. The determinant of the
upper left $2\times2$ block is
\[
 a(-\bar a e^{2i\beta})-D e^{2i\beta}
       =-(|a|^2+D)e^{2i\beta}=-e^{2i\beta}.
\]
Thus $1=\det U=-e^{2i\beta}U_{22}$, which fixes
$U_{22}=-e^{-2i\beta}$. We have obtained
\begin{equation}\label{eq:critical-matrix}
 U=\begin{pmatrix}
 a&\sqrt D\,e^{i\beta}&0\\
 \sqrt D\,e^{i\beta}&-\bar a e^{2i\beta}&0\\
 0&0&-e^{-2i\beta}
 \end{pmatrix}.
\end{equation}

This normal form is imposed only at $(p,q)$. Choose geodesic
normal coordinates separately on the two surfaces whose coordinate
bases at the selected points are the newly chosen $e_i$ and $v_j$.
Minimality in the sphere gives $\Delta F_1=-2F_1$ and
$\Delta F_2=-2F_2$. Since $Q$ is fixed when differentiating in
$x$ and $P$ is fixed when differentiating in $y$,
\[
 \Delta_xa=\ip{\Delta F_1}{Q}=-2a,
 \qquad \Delta_ya=\ip{P}{\Delta F_2}=-2a.
\]
Taking one derivative in each factor differentiates only the
corresponding lift, giving
$a_{x_i y_j}=\ip{dF_1(e_i)}{dF_2(v_j)}=U_{ij}$.
We therefore have
\begin{equation}\label{eq:a-second-derivatives}
 \Delta_xa=\Delta_ya=-2a,\qquad
 a_{x_iy_j}=\ip{E_i}{V_j}=U_{ij}.
\end{equation}

We spell out the derivatives in the two product tangent directions.
All derivatives in the following computation are evaluated at $(p,q)$.
Since $W_1=(e_1,v_1)$ and $W_2=(e_2,-v_2)$, linearity of the
differential gives
\[
 \begin{aligned}
 da(W_1)&=a_{x_1}+a_{y_1}=U_{10}+U_{01}
                  =2\sqrt D\,e^{i\beta},\\
 da(W_2)&=a_{x_2}-a_{y_2}=U_{20}-U_{02}=0.
 \end{aligned}
\]
The connection on the product is the product of the two surface
connections. Its Christoffel symbols vanish at $(p,q)$ in the chosen
normal coordinates. Thus the two Hessian evaluations are
\[
 \begin{aligned}
 \Hess a(W_1,W_1)
   &=a_{x_1x_1}+2a_{x_1y_1}+a_{y_1y_1},\\
 \Hess a(W_2,W_2)
   &=a_{x_2x_2}-2a_{x_2y_2}+a_{y_2y_2}.
 \end{aligned}
\]
In particular, the minus sign in $W_2$ changes the mixed term and
leaves the pure $y_2$ term unchanged. Adding these identities and
then applying \eqref{eq:a-second-derivatives}, we obtain
\[
 \begin{aligned}
 \sum_{j=1}^2\Hess a(W_j,W_j)
  &=\Delta_xa+\Delta_ya+2a_{x_1y_1}-2a_{x_2y_2}\\
  &=-2a-2a+2U_{11}-2U_{22}.
 \end{aligned}
\]
This proves
\begin{equation}\label{eq:a-W}
 \begin{split}
 da(W_1)&=2\sqrt D\,e^{i\beta},\qquad da(W_2)=0,\\
 \sum_{j=1}^2\Hess a(W_j,W_j)&=-4a+2U_{11}-2U_{22}.
 \end{split}
\end{equation}
Here and below, the Hessian of the complex-valued function $a$ is
understood componentwise. The matrix normal form
\eqref{eq:critical-matrix} has supplied only values at the selected
pair. No derivative of that normal form has been taken.

For any real tangent vector $W$ to the product at $(p,q)$,
the Hessian chain rule for the real-valued function $\psi$ gives
\begin{equation}\label{eq:chain-rule}
 \Hess \mathcal Z_\psi(W,W)
 =2\Re\bigl(\xi\,\Hess a(W,W)+\psi_{aa}\,da(W)^2\bigr)
       +2\psi_{a\bar a}|da(W)|^2.
\end{equation}
In this formula $da(W)^2$ is the complex square, whereas
$|da(W)|^2=da(W)\overline{da(W)}$.

It remains to sum \eqref{eq:chain-rule} for $W_1,W_2$.
Equations \eqref{eq:beta} and \eqref{eq:critical-matrix} give
\[
 U_{11}=\bar a\frac{\bar\xi}{\xi},\qquad
 U_{22}=\frac{\xi}{\bar\xi},\qquad
 \xi U_{11}=\bar a\bar\xi,\qquad
 \xi U_{22}=\frac{\xi^2}{\bar\xi}.
\]
Consequently, the terms containing the Hessian of $a$ have sum
\[
 \begin{aligned}
 2\Re\left(\xi\sum_{j=1}^2\Hess a(W_j,W_j)\right)
   &=-8\Re(a\xi)+4\Re(\bar a\bar\xi)
                         -4\Re\frac{\xi^2}{\bar\xi}\\
   &=-4\Re(a\xi)-4\Re\frac{\xi^2}{\bar\xi}.
 \end{aligned}
\]
For the other terms, \eqref{eq:a-W} gives
\[
 \sum_{j=1}^2 da(W_j)^2=4D e^{2i\beta},\qquad
 \sum_{j=1}^2 |da(W_j)|^2=4D.
\]
Their contribution is therefore
\[
 \begin{aligned}
 2\Re\left(\psi_{aa}\sum_{j=1}^2 da(W_j)^2\right)
       +2\psi_{a\bar a}\sum_{j=1}^2|da(W_j)|^2
  &=8D\Re(\psi_{aa}e^{2i\beta})+8D\psi_{a\bar a}\\
  &=8D\left(\psi_{a\bar a}
          -\Re\left(\psi_{aa}\frac{\bar\xi}{\xi}\right)\right).
 \end{aligned}
\]
Adding the two contributions proves \eqref{eq:contact-trace}.
\end{proof}

\begin{lemma}\label{lem:farthest}
Let $\Gamma\subset S^5$ be a closed oriented minimal Legendrian
surface, possibly disconnected, whose positively oriented orthonormal frames
satisfy $\det\nolimits_{\mathbb{C}}(P,E_1,E_2)=1$. Define $h:\Gamma\times\Gamma\to\mathbb R$ by
$h(P,Q)=\Re\ip{P}{Q}$.
If $(P,Q)$ is a global minimum point of $h$, then $|\ip{P}{Q}|=1$.
\end{lemma}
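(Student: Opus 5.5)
The plan is to argue by contradiction using the critical-point Hessian identity of Lemma~\ref{lem:contact} with the simplest possible outer function, $\psi(a,\bar a)=\Re a=\tfrac12(a+\bar a)$. Suppose $(P,Q)$ is a global minimum point of $h$ but $|\ip{P}{Q}|<1$, so that $D=1-|a|^2>0$ at $(P,Q)$.

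\emph{Setting up the lemma.} Parametrize $\Gamma$ near $P$ and near $Q$ by the inclusion maps. Call these local maps $F_1$ and $F_2$. The points may lie on the same or on different components of $\Gamma$; this makes no difference, since the lemma concerns two independent local immersions. By hypothesis both are normalized horizontal minimal Legendrian immersions, so Lemma~\ref{lem:contact} applies to $\mathcal Z_\psi=h$ near $(P,Q)$. Because $(P,Q)$ is an interior minimum of the smooth function $h$ on the closed manifold $\Gamma\times\Gamma$, we have $dh=0$ there and $\Hess h\ge0$. For $\psi=\Re a$ the derivatives are
\[
\xi=\psi_a=\tfrac12\neq0,\qquad \psi_{aa}=\psi_{a\bar a}=0 .
\]
All hypotheses of the lemma therefore hold.

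\emph{Applying the identity.} Substituting into \eqref{eq:contact-trace} gives
\[
\mathcal L h=-4\Re\Bigl(\tfrac12 a\Bigr)-4\Re\frac{1/4}{1/2}=-2\Re a-2 .
\]
Nonnegativity of the Hessian at the minimum yields $\mathcal L h=\Hess h(W_1,W_1)+\Hess h(W_2,W_2)\ge0$, and hence $\Re a\le-1$. On the other hand, $|a|<1$ because $D>0$, so $\Re a\ge-|a|>-1$. This is a contradiction, and therefore $D=0$, that is, $|\ip{P}{Q}|=1$.

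\emph{Where the difficulty lies.} There is essentially no obstacle beyond checking that Lemma~\ref{lem:contact} applies verbatim. In particular, one must confirm that the determinant normalization holds for the positively oriented frames at both points, which is exactly the hypothesis on $\Gamma$. One must also confirm that the $D>0$ assumption is used only to invoke the lemma and to get the strict inequality $\Re a>-1$.
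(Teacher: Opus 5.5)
Your proposal is correct and is essentially the paper's own proof. You apply Lemma~\ref{lem:contact} with $\psi_h=(a+\bar a)/2$ and obtain $\mathcal Lh=-2(1+\Re a)$, which is negative because $\Re a\ge-|a|>-1$. This contradicts the nonnegativity of $\Hess h(W_1,W_1)+\Hess h(W_2,W_2)$ at a minimum, and the argument needs no connectedness of $\Gamma$.
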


\begin{proof}
Suppose that $|a|<1$ at a minimizing pair, where $a=\ip{P}{Q}$.
Then $D=1-|a|^2>0$. The restriction of the inclusion of $\Gamma$
to neighborhoods of $P$ and $Q$ gives the two normalized immersions
required in Lemma~\ref{lem:contact}. Set
\[
 \psi_h(a,\bar a)=\frac{a+\bar a}{2},\qquad
 (\psi_h)_a=\frac12,\qquad
 (\psi_h)_{aa}=(\psi_h)_{a\bar a}=0.
\]
Then $\mathcal Z_{\psi_h}=h$, and $dh=0$ at the selected pair.
Thus all the hypotheses of that lemma hold. Substitution into
\eqref{eq:contact-trace} yields
\[
 \begin{aligned}
 \mathcal Lh
  &=-4\Re\left(\frac a2\right)
     -4\Re\left(\frac{(1/2)^2}{1/2}\right)\\
  &=-2(1+\Re a)<0,
 \end{aligned}
\]
where the strict inequality follows from $\Re a\geq-|a|>-1$.
At a minimum, however, the second derivative of $h$ along every
product geodesic is nonnegative. In particular, both
$\Hess h(W_1,W_1)$ and $\Hess h(W_2,W_2)$ are nonnegative, so their
sum cannot be negative. This contradiction proves $|a|=1$.
The argument takes place in neighborhoods of the selected points
and requires no connectedness assumption on $\Gamma$.
\end{proof}

\begin{proposition}\label{prop:triangle}
Let $f:L\to\CP^2$ be a minimal Lagrangian embedding of a closed
oriented surface, and let $\Gamma$ be its normalized lift. Then
\begin{equation}\label{eq:height-bound}
 \Re\ip{P}{Q}\geq-\frac12\qquad(P,Q\in\Gamma).
\end{equation}
Equality holds precisely when $Q=\zeta P$ or $Q=\zeta^2P$.
For distinct $x,y\in L$ and any normalized lifts,
\begin{equation}\label{eq:triangle-bound}
 1+2\Re(\zeta^j a)>0\quad(j=0,1,2),
 \qquad G(x,y)>0.
\end{equation}
\end{proposition}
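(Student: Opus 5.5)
The plan is to get \eqref{eq:height-bound} directly from Lemma~\ref{lem:farthest}, and then to deduce \eqref{eq:triangle-bound} from an algebraic factorization of $G$.

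\emph{The height bound.} The function $h(P,Q)=\Re\ip{P}{Q}$ is continuous on the compact set $\Gamma\times\Gamma$, so it attains a global minimum at some pair $(P_0,Q_0)$. By Lemma~\ref{lem:farthest}, $|\ip{P_0}{Q_0}|=1$. Then $Q_0=uP_0$ for a unit scalar $u$, so $P_0$ and $Q_0$ lie in the same Hopf fiber. By the last assertion of Lemma~\ref{lem:lift}, $\Gamma$ meets that fiber exactly in $P_0,\zeta P_0,\zeta^2P_0$. Hence $u\in\{1,\zeta,\zeta^2\}$ and $h(P_0,Q_0)=\Re u\in\{1,-\tfrac12\}$. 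The pair $(P_0,\zeta P_0)$ lies in $\Gamma\times\Gamma$, since $\zeta P_0=F(\tau(\cdot))$, and it gives the value $-\tfrac12$. So the minimum equals $-\tfrac12$, which proves \eqref{eq:height-bound}.

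\emph{The equality case.} Suppose $\Re\ip{P}{Q}=-\tfrac12$. Then $(P,Q)$ is itself a global minimum point, so Lemma~\ref{lem:farthest} applies to it. Repeating the argument gives $Q=\zeta^jP$, and the value $-\tfrac12$ excludes $j=0$. Conversely, $Q=\zeta P$ and $Q=\zeta^2P$ both give the value $-\tfrac12$.

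\emph{The strict inequalities for $x\ne y$.} Take normalized lifts $P=F(x)$ and $Q=F(y)$. The points $\zeta^jQ$ also lie in $\Gamma$ because of the deck transformation $\tau$. Since $\ip{P}{\zeta^jQ}=\zeta^ja$, inequality \eqref{eq:height-bound} gives $1+2\Re(\zeta^ja)\ge0$. Equality would force $\zeta^jQ\in\{\zeta P,\zeta^2P\}$, so $f(x)=f(y)$. This contradicts embeddedness, and the first part of \eqref{eq:triangle-bound} follows.

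\emph{The factorization of $G$.} Write $a=re^{i\theta}$ and $x_j=\Re(\zeta^ja)=r\cos(\theta+2\pi j/3)$. These satisfy
\[
\sum_j x_j=0,\qquad \sum_j x_j^2=\tfrac32r^2,\qquad \sum_j x_j^3=\tfrac34r^3\cos3\theta.
\]
The elementary symmetric functions are therefore
\[
\sum_{i<j}x_ix_j=-\tfrac34r^2,\qquad x_0x_1x_2=\tfrac13\sum_j x_j^3=\tfrac14 r^3\cos 3\theta.
\]
Expanding the product, I expect the identity
\[
\prod_{j=0}^{2}\bigl(1+2\Re(\zeta^ja)\bigr)=1-3|a|^2+2\Re(a^3)=G .
\]
This is the complex analogue of $1-3r^2+2r^3=(1-r)^2(1+2r)$. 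Each factor is positive for $x\ne y$, so $G(x,y)>0$.

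The only step needing care is confirming that Lemma~\ref{lem:farthest} really applies to every minimizing pair, including the equality case, and that the fiber count in Lemma~\ref{lem:lift} rules out any other points of $\Gamma$ on the same fiber. The product identity is a routine symmetric-function check.
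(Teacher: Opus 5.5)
Your proposal is correct and follows the paper's proof essentially step for step. The steps are: the minimum of $\Re\ip{P}{Q}$ via Lemma~\ref{lem:farthest}; the three-point fiber structure from Lemma~\ref{lem:lift}, which pins the minimum at $-\tfrac12$ and characterizes equality; embeddedness for the strict inequalities; and the product identity for $G$. The one cosmetic difference is how you verify the factorization, and that verification is complete: with the symmetric functions $e_1,e_2,e_3$ you computed for $x_j=r\cos(\theta+2\pi j/3)$, the step you left as ``expected'' is the one-line identity $\prod_j(1+2x_j)=1+2e_1+4e_2+8e_3=1-3r^2+2r^3\cos3\theta$, whereas the paper expands the product directly in $X=\Re a$, $Y=\Im a$.
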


\begin{proof}
Since $\Gamma$ is compact, the function
$(P,Q)\mapsto\Re\ip{P}{Q}$ attains a minimum $m$ on
$\Gamma\times\Gamma$. At a minimizing pair, Lemma~\ref{lem:farthest}
gives $|\ip{P}{Q}|=1$. Equality in the Hermitian Cauchy--Schwarz
inequality implies $Q=e^{i\theta}P$ for some real $\theta$; hence
$P$ and $Q$ project to the same point of $\CP^2$.

Here the embedding assumption is essential. It implies that the
base points in $L$ are equal. By Lemma~\ref{lem:lift}, the normalized
lift over that base point consists exactly of
$P,\zeta P,\zeta^2P$. The possible values at the minimizing pair
are therefore
\[
 \Re\ip{P}{P}=1,\qquad
 \Re\ip{P}{\zeta P}=\Re\zeta=-\frac12,\qquad
 \Re\ip{P}{\zeta^2P}=\Re\zeta^2=-\frac12.
\]
The last two pairs occur for every $P\in\Gamma$, so the global minimum is $-1/2$.
This proves \eqref{eq:height-bound}. If equality holds at any pair,
that pair is itself a global minimum and the same reasoning shows
$Q=\zeta P$ or $Q=\zeta^2P$. Conversely, either relation gives
equality by the displayed calculation.

Now let $x\ne y$, and choose normalized lifts $P,Q$. The points
$P$ and $\zeta^jQ$ project respectively to $f(x)$ and $f(y)$, which
are distinct by embeddedness. Thus none of these pairs can be an
equality pair in \eqref{eq:height-bound}. Since the Hermitian product
is linear in the second variable,
\[
 \Re\ip{P}{\zeta^jQ}=\Re(\zeta^j a)>-\frac12,
 \qquad j=0,1,2.
\]
This proves the three strict inequalities in
\eqref{eq:triangle-bound}. To compute the product of the three positive quantities $1+2\Re(\zeta^j a)$, $j=0,1,2$, write
$a=X+iY$. Since $\zeta=-1/2+i\sqrt3/2$, these quantities are, respectively,
\[
 1+2X,\qquad 1-X-\sqrt3Y,\qquad 1-X+\sqrt3Y.
\]
Multiplying the last two and then the first gives
\[
 \begin{aligned}
 (1+2X)\bigl((1-X)^2-3Y^2\bigr)
  &=1-3X^2+2X^3-3Y^2-6XY^2\\
  &=1-3(X^2+Y^2)+2(X^3-3XY^2).
 \end{aligned}
\]
Using $|a|^2=X^2+Y^2$ and $\Re(a^3)=X^3-3XY^2$, we conclude that
\begin{equation}\label{eq:triangle-product}
 \prod_{j=0}^{2}\bigl(1+2\Re(\zeta^j a)\bigr)
       =1-3|a|^2+2\Re(a^3)=G.
\end{equation}
Every factor is strictly positive for $x\ne y$, and hence $G(x,y)>0$.
\end{proof}

Proposition~\ref{prop:triangle} gives $G>0$ and hence $q>0$ off the diagonal. We next apply the Hessian identity \eqref{eq:contact-trace} to $Z_k=G-kD^2$. The following lemma evaluates its right-hand side for the polynomial $\psi_k$ defined below.

\begin{lemma}\label{lem:algebra}
Fix $0<k<3/4$ and define the real polynomial
\begin{equation}\label{eq:barrier-polynomial}
 \psi_k(a,\bar a)=1-3|a|^2+a^3+\bar a^3-k(1-|a|^2)^2.
\end{equation}
Suppose $|a|<1$ and $\psi_k(a,\bar a)=0$. Put
\begin{equation}\label{eq:algebra-parameters}
 D=1-|a|^2,\qquad \eta=3-4k,\qquad
 M=\eta(3-k)+2Dk^2,\qquad \xi=(\psi_k)_a.
\end{equation}
Then $\xi\ne0$, and
\begin{equation}\label{eq:algebra-trace}
 \begin{split}
 &-4\Re(a\xi)-4\Re\frac{\xi^2}{\bar\xi}
   +8D\left((\psi_k)_{a\bar a}
        -\Re\left((\psi_k)_{aa}\frac{\bar\xi}{\xi}\right)\right)\\
 &\hspace{35mm}=-\frac{D\eta}{M}P_k(D)<0,
 \end{split}
\end{equation}
where
\begin{equation}\label{eq:Pk}
 P_k(D)=18-12k-16k^2+D(-30k+44k^2)-4D^2k^3.
\end{equation}
\end{lemma}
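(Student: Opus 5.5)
The plan is a direct computation: reduce the left-hand side of \eqref{eq:algebra-trace} to a rational function of $D$ alone, then check its sign.

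\emph{Derivatives of $\psi_k$.} Differentiating \eqref{eq:barrier-polynomial} gives
\[
\xi=(\psi_k)_a=3a^2-(3-2kD)\bar a,\qquad
(\psi_k)_{aa}=6a-2k\bar a^2,\qquad
(\psi_k)_{a\bar a}=-3-2k+4kD.
\]
The constraint $\psi_k=0$ is equivalent to
\[
2\Re(a^3)=2-3D+kD^2 .
\]
Multiplying through by $|\xi|^2$ rewrites the left-hand side of \eqref{eq:algebra-trace} as $|\xi|^{-2}$ times
\[
-4|\xi|^2\Re(a\xi)-4\Re(\xi^3)+8D\bigl((\psi_k)_{a\bar a}|\xi|^2-\Re((\psi_k)_{aa}\bar\xi^2)\bigr).
\]

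\emph{Invariance and reduction to $D$.} Under $a\mapsto\zeta a$ we have $\xi\mapsto\zeta^2\xi$ and $(\psi_k)_{aa}\mapsto\zeta(\psi_k)_{aa}$, so every term above is invariant. Each term is therefore a real polynomial in $|a|^2=1-D$, $\Re(a^3)$, and $(\Im a^3)^2=(1-D)^3-(\Re a^3)^2$. Substituting the constraint turns the numerator and $|\xi|^2$ into polynomials in $D$ with coefficients in $k$. For instance,
\[
|\xi|^2=9(1-D)^2+(3-2kD)^2(1-D)-6(3-2kD)\Re(a^3),
\]
which should simplify to a multiple of $DM$, or to $M$ times a simple factor. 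I would expand both numerator and denominator and confirm that the quotient is $-D\eta P_k(D)/M$. This expansion, and spotting the factorization through $\eta=3-4k$ and $M$, is the main obstacle. I would organize it by collecting powers of $D$, and cross-check at $D\to0$, where $a$ is near a cube root of unity and both sides should vanish to the stated order.

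\emph{Nonvanishing of $\xi$.} Once $|\xi|^2$ is identified as a positive multiple of $M$ (times a factor positive on $(0,1]$), $\xi\ne0$ follows because $M>0$. Independently, if $\xi=0$ then $3a^3=(3-2kD)|a|^2$ is real. Combining this with the constraint gives $3(1-D)^{1/2}=3-2kD$ in modulus together with $2-3D+kD^2=\tfrac23(3-2kD)(1-D)$, and eliminating yields a contradiction for $0<k<3/4$ and $0<D\le1$.

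\emph{Sign.} We have $0<D\le1$ since $|a|<1$, and $\eta>0$ since $k<3/4$. Then $M=\eta(3-k)+2Dk^2>0$. As a function of $D$, $P_k$ is concave because its $D^2$ coefficient is $-4k^3<0$, so it suffices to check the endpoints. At $D=0$, $P_k(0)=18-12k-16k^2$, which is decreasing in $k$ and equals $0$ at $k=3/4$, hence positive. At $D=1$, $P_k(1)=18-42k+28k^2-4k^3$. This equals $18$ at $k=0$ and $9/16$ at $k=3/4$, and I would confirm positivity on $[0,3/4]$ by noting that its derivative $-42+56k-12k^2$ is negative there. Hence $P_k(D)>0$ and the right-hand side of \eqref{eq:algebra-trace} is strictly negative.
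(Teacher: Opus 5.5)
Your route is valid and differs from the paper's, but as written it only announces the step that carries the whole content of the lemma.

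\textbf{The deferred identity.} Clearing denominators by $|\xi|^2$ and using $2\Re(a^3)=2-3D+kD^2$ does reduce everything to polynomials in $D$. The targets are exact: on the level set $|\xi|^2=D^2M$ and $\Re(a\xi)=-\tfrac D2(\eta+kD)$. So the lemma is equivalent to the degree-five identity
\[
-4|\xi|^2\Re(a\xi)-4\Re(\xi^3)+8D\bigl((\psi_k)_{a\bar a}|\xi|^2-\Re((\psi_k)_{aa}\bar\xi^{2})\bigr)=-D^3\eta\,P_k(D).
\]
Here $\Re(\xi^3)$ and $\Re((\psi_k)_{aa}\bar\xi^2)$ are written in terms of $s=|a|^2$, $b=\Re(a^3)$ and $\Re(a^6)=2b^2-s^3$. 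The identity is true; for instance, at $k=D=\tfrac12$ both sides equal $-\tfrac{47}{64}$. Until you carry out the expansion, however, it is asserted rather than proved.

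\textbf{A correction to the reduction step.} Invariance under $a\mapsto\zeta a$ alone still allows odd powers of $\Im(a^3)$. You also need invariance under $a\mapsto\bar a$. This holds because each of your expressions is the real part of a real-coefficient polynomial in $a,\bar a$.

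\textbf{How the paper does it.} The paper avoids the brute-force expansion. It proves $|\xi|^2=D^2M-3\mu\psi_k$ and $2\Re(a\xi)=3\psi_k-D(\eta+kD)$ on all of $\C$. It then gets the Hessian term by differentiating the first identity before restricting to $\psi_k=0$. Finally it reduces $\Re(\xi^2/\bar\xi)$ to $R=\Re(a\xi)$ and $N=|\xi|^2$ via $3NS=18R^2+\mu\nu R+\nu N$. In this way the denominator $M$ and the factor $D\eta$ appear structurally, rather than after a large cancellation.

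\textbf{Your remaining steps.} These are correct.
\begin{itemize}
\item In your independent argument for $\xi\neq0$, only the second equation is needed. It is exactly $\Re(a\xi)=3\Re(a^3)-\mu|a|^2=0$, which on the level set becomes $D(\eta+kD)=0$. The modulus equation is superfluous.
\item Your sign argument is a valid alternative to the paper's. You use concavity of $P_k$ in $D$, together with $P_k(0)>0$ and $P_k(1)>0$ for $0<k<3/4$. The paper instead uses the decomposition $P_k(D)=6\eta(1-k)+10\eta k(1-D)+4Dk^2(1-Dk)$, which gives the uniform bound $P_k\ge6\eta(1-k)$ in one line.
\end{itemize}
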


\begin{proof}
Throughout this proof $k$ is a fixed real number. Write
$s=|a|^2=1-D$, $b=\Re(a^3)$, and $\mu=3-2kD$.
Since $D_a=-\bar a$ and $\mu_a=2k\bar a$, the first derivative is
\[
 (\psi_k)_a=-3\bar a+3a^2+2kD\bar a
                         =3a^2-\mu\bar a.
\]
Differentiating this expression with respect to $a$ and $\bar a$
gives, respectively, $6a-2k\bar a^2$ and $-\mu-2ks$. Therefore
\begin{equation}\label{eq:psi-derivatives}
 \xi=3a^2-\mu\bar a,\qquad
 (\psi_k)_{aa}=6a-2k\bar a^2,\qquad
 (\psi_k)_{a\bar a}=-3+2k-4k|a|^2.
\end{equation}

We first establish the two identities
\begin{equation}\label{eq:global-gradient-identities}
 \begin{split}
 |\xi|^2&=D^2M-3\mu\psi_k,\\
 2\Re(a\xi)&=3\psi_k-D(\eta+kD).
 \end{split}
\end{equation}
Both identities hold on the whole complex plane, before imposing
$\psi_k=0$. Indeed, multiplication of $\xi$ by its conjugate gives
\[
 \begin{aligned}
 |\xi|^2
   &=(3a^2-\mu\bar a)(3\bar a^2-\mu a)
     =9s^2+\mu^2s-6\mu b,\\
 2\Re(a\xi)&=6b-2\mu s.
 \end{aligned}
\]
On the other hand,
\[
 \psi_k=1-3(1-D)+2b-kD^2,
 \qquad 2b=\psi_k+2-3D+kD^2.
\]
Substitution in the norm identity gives
\[
 |\xi|^2+3\mu\psi_k
  =9(1-D)^2+(3-2kD)^2(1-D)
     -3(3-2kD)(2-3D+kD^2).
\]
The three terms on the right expand as follows:
\[
 \begin{aligned}
 9(1-D)^2&=9-18D+9D^2,\\
 (3-2kD)^2(1-D)
   &=9-(9+12k)D+(12k+4k^2)D^2-4k^2D^3,\\
 -3(3-2kD)(2-3D+kD^2)
   &=-18+(27+12k)D-27kD^2+6k^2D^3.
 \end{aligned}
\]
The constant and linear terms cancel. Their sum is
\[
 D^2(9-15k+4k^2+2Dk^2)
  =D^2\bigl((3-4k)(3-k)+2Dk^2\bigr)=D^2M.
\]
For the second identity, the same substitution gives
\[
 \begin{aligned}
 6b-2\mu s
  &=3\psi_k+6-9D+3kD^2-2(3-2kD)(1-D)\\
  &=3\psi_k+(4k-3)D-kD^2
   =3\psi_k-D(\eta+kD).
 \end{aligned}
\]
This proves \eqref{eq:global-gradient-identities}.

At the point under consideration, $\psi_k=0$. Thus
\begin{equation}\label{eq:NR-values}
 N:=|\xi|^2=D^2M>0,\qquad
 R:=\Re(a\xi)=-\frac D2(\eta+kD).
\end{equation}
Here $0<D\leq1$, $\eta=3-4k>0$, and $3-k>0$, so
$M=\eta(3-k)+2Dk^2>0$. In particular, $\xi\ne0$; all divisions by
$\xi$, $N$, and $M$ below are justified.

To evaluate the Hessian contribution, set
\[
 H=2(\psi_k)_{a\bar a}
       -2\Re\left((\psi_k)_{aa}\frac{\bar\xi}{\xi}\right),
 \qquad S=\Re\frac{\xi^2}{\bar\xi}.
\]
The expression on the left of \eqref{eq:algebra-trace} is
$-4R-4S+4DH$. The quantity $H$ is the Euclidean Hessian of $\psi_k$
in a unit tangent direction to its regular level curve. More
precisely, let $w_k=i\bar\xi/|\xi|$. Then
\[
 |w_k|=1,\qquad d\psi_k(w_k)=2\Re(\xi w_k)=0,\qquad
 w_k^2=-\frac{\bar\xi}{\xi}.
\]
The real Hessian, evaluated along the straight line $a+tw_k$ with
$w_k$ fixed, is
\[
 \Hess_{\R^2}\psi_k(w_k,w_k)
   =2\Re\bigl((\psi_k)_{aa}w_k^2\bigr)
       +2(\psi_k)_{a\bar a}|w_k|^2=H.
\]
This is an ambient Hessian evaluation, and differentiating the constant
restriction of $\psi_k$ along its level curve would introduce the
acceleration of that curve and would not give this formula.

We compute $H$ from the first identity in
\eqref{eq:global-gradient-identities}. It is essential to
differentiate that identity before restricting to $\psi_k=0$.
Since $\partial_a\bar\xi=(\psi_k)_{a\bar a}$, its derivative is
\[
 (\psi_k)_{aa}\bar\xi+(\psi_k)_{a\bar a}\xi
   =-(D^2M)'\bar a-6k\bar a\psi_k-3\mu\xi,
\]
where the prime denotes differentiation in $D$ with $k$ fixed.
Setting $\psi_k=0$ now gives
\begin{equation}\label{eq:differentiated-gradient}
 (\psi_k)_{aa}\bar\xi+(\psi_k)_{a\bar a}\xi
 =-(D^2M)'\bar a-3\mu\xi.
\end{equation}
After division by $\xi$ and taking real parts, this becomes
\[
 \Re\left((\psi_k)_{aa}\frac{\bar\xi}{\xi}\right)
   +(\psi_k)_{a\bar a}
   =-(D^2M)'\frac RN-3\mu.
\]
Here we used
$\Re(\bar a/\xi)=\Re(\bar a\bar\xi)/|\xi|^2=R/N$.
Also, $M'=2k^2$, and hence $(D^2M)'=2D(M+Dk^2)$.
Using \eqref{eq:psi-derivatives}, we find
\[
 4(\psi_k)_{a\bar a}+6\mu
   =4(-3-2k+4kD)+6(3-2kD)=2\eta+4kD.
\]
Substituting these expressions and \eqref{eq:NR-values} gives
\begin{equation}\label{eq:H-from-gradient}
 \begin{split}
 H&=4(\psi_k)_{a\bar a}+6\mu+\frac{2(D^2M)'R}{N}\\
  &=2\eta+4kD-\frac{2(M+Dk^2)(\eta+kD)}{M}\\
  &=2Dk\left(1-\frac{k(\eta+kD)}{M}\right).
 \end{split}
\end{equation}
For the last equality, expand $M+Dk^2$ in the second line: its
$M$ contribution cancels $2\eta+2kD$, leaving
$2kD-2Dk^2(\eta+kD)/M$.

The definition of $M$ gives
\[
 M-2k(\eta+kD)
   =\eta(3-k)-2k\eta=3\eta(1-k).
\]
Dividing by $2M$ yields
\begin{equation}\label{eq:ratio-identity}
 \frac{k(\eta+kD)}{M}
 =\frac12-\frac{3\eta(1-k)}{2M},
\end{equation}
and substitution into \eqref{eq:H-from-gradient} gives
\begin{equation}\label{eq:H-value}
 H=Dk+\frac{3Dk\eta(1-k)}{M}.
\end{equation}

We next reduce $S$ to the real quantities $R$ and $N$. Define
\[
 \begin{aligned}
 \nu&=\mu^2-9s
   =(3-2kD)^2-9(1-D)\\
   &=D(9-12k+4Dk^2)=D(3\eta+4Dk^2).
 \end{aligned}
\]
The definition of $\xi$ gives
\[
 \begin{aligned}
 3a\bar\xi&=9s\bar a-3\mu a^2
       =-\mu(3a^2-\mu\bar a)-\nu\bar a
       =-\mu\xi-\nu\bar a,\\
 \xi^3&=(3a^2-\mu\bar a)\xi^2
       =3(a\xi)^2-\mu\bar a\xi^2.
 \end{aligned}
\]
Multiplying the first identity by $\bar\xi$ and taking real parts
gives
\[
 3\Re(a\bar\xi^{2})=-\mu N-\nu R.
\]
For the second identity, note that
$\xi^2/\bar\xi=\xi^3/N$ and
$\Re(\bar a\xi^2)=\Re(a\bar\xi^{2})$.
For any complex number $z$,
$\Re(z^2)=2(\Re z)^2-|z|^2$. Applying this to $z=a\xi$ gives
\[
 \Re((a\xi)^2)=2R^2-sN.
\]
Thus taking real parts in the second identity gives
\[
 NS=6R^2-3sN+\frac\mu3(\mu N+\nu R).
\]
Multiplication by $3$ and use of $\mu^2-9s=\nu$ yield
\begin{equation}\label{eq:S-quadratic}
 3NS=18R^2+\mu\nu R+\nu N.
\end{equation}

We now substitute the known values of $R,N,\nu$. First,
\[
 \begin{aligned}
 S&=\frac\nu3+\frac{R(18R+\mu\nu)}{3N}\\
  &=\frac D3(3\eta+4Dk^2)
    +\frac{\eta+kD}{6M}
       \bigl[9(\eta+kD)-\mu(3\eta+4Dk^2)\bigr].
 \end{aligned}
\]
The expression in brackets satisfies
\[
 \begin{aligned}
 9(\eta+kD)-(3-2kD)(3\eta+4Dk^2)
   &=Dk(9-12k+6\eta+8Dk^2)\\
   &=Dk(9\eta+8Dk^2),
 \end{aligned}
\]
where $9-12k=3\eta$. Hence
\begin{equation}\label{eq:S-value}
 \begin{split}
 S&=\frac D3(3\eta+4Dk^2)
       +\frac{Dk(\eta+kD)}{6M}(9\eta+8Dk^2)\\
  &=\frac D4(7\eta+8Dk^2)
       -\frac{D\eta(1-k)}{4M}(9\eta+8Dk^2).
 \end{split}
\end{equation}
Indeed, \eqref{eq:ratio-identity} rewrites the second term in the
first line as
\[
 \frac D{12}(9\eta+8Dk^2)
 -\frac{D\eta(1-k)}{4M}(9\eta+8Dk^2).
\]
Adding its first summand to $D(3\eta+4Dk^2)/3$ gives
$D(7\eta+8Dk^2)/4$, which proves the second line.

The three contributions to the trace are
\[
 \begin{aligned}
 -4R&=2D(\eta+kD),\\
 -4S&=-D(7\eta+8Dk^2)
       +\frac{D\eta(1-k)}{M}(9\eta+8Dk^2),\\
 4DH&=4D^2k+\frac{12D^2k\eta(1-k)}{M}.
 \end{aligned}
\]
The terms without $M$ in the denominator sum to
\[
 -5D\eta+6D^2k-8D^2k^2
  =D\eta(-5+2Dk),
\]
and the remaining terms have common numerator
$D\eta(1-k)[9\eta+4Dk(2k+3)]$. Therefore
\begin{equation}\label{eq:combine-polynomial}
 \begin{split}
 -4R-4S+4DH
 &=D\eta\left[-5+2Dk
       +\frac{1-k}{M}\bigl(9\eta+4Dk(2k+3)\bigr)\right]\\
 &=-\frac{D\eta}{M}P_k(D).
 \end{split}
\end{equation}
Using the definition of $M$ and $\eta=3-4k$, we obtain
\[
 P_k(D)=(5-2Dk)M
   -(1-k)\bigl(9\eta+4Dk(2k+3)\bigr).
\]

Finally, for $0<k<3/4$ and $0\leq D\leq1$, we have
\begin{equation}\label{eq:Pk-positive}
 \begin{aligned}
 P_k(D)
 &=6\eta(1-k)+10\eta k(1-D)+4Dk^2(1-Dk)\\
 &\geq6\eta(1-k)>0.
 \end{aligned}
\end{equation}
Since $D,\eta,M>0$ at the point under consideration,
\eqref{eq:combine-polynomial} is strictly negative, proving
\eqref{eq:algebra-trace}.
\end{proof}

\begin{proposition}\label{prop:sharp}
Let $f:L\to\CP^2$ be a minimal Lagrangian embedding of a closed
oriented surface. For every pair of distinct points,
\begin{equation}\label{eq:sharp-quotient}
 \frac{1-3|a|^2+2\Re(a^3)}{(1-|a|^2)^2}\geq\frac34.
\end{equation}
\end{proposition}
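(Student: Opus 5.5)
The plan is a minimum-principle argument for $q=G/D^2$ on $L\times L$, as outlined in the introduction. By Lemma~\ref{lem:diagonal}, $q$ extends continuously to $L\times L$ with value $3/4$ on the diagonal. By Proposition~\ref{prop:triangle}, $q>0$ off the diagonal. Compactness then gives a minimum value $k>0$. We argue by contradiction and suppose $k<3/4$. Then the minimum is attained at some pair $(p,q)$ with $p\ne q$, since on the diagonal $q=3/4>k$. At this pair $D>0$, so $|a|<1$.

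Consider the barrier $Z_k=G-kD^2$. It is smooth on $L\times L$ and satisfies $Z_k=D^2(q-k)\ge0$ everywhere, including on the diagonal, where both terms vanish. It vanishes at $(p,q)$, so $(p,q)$ is a global minimum of $Z_k$. In particular $dZ_k=0$ there and $\Hess Z_k$ is positive semidefinite there.

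Next, lift to the normalized sheets. Near $p$ and $q$, choose normalized local lifts $F_1,F_2$ from Lemma~\ref{lem:lift}, which are restrictions of the global $F$ to local sheets. With $a=\ip{F_1(x)}{F_2(y)}$ we have $|a|^2$ and $\Re(a^3)$ as in the definitions, so $Z_k=\psi_k(a,\bar a)$ locally, with $\psi_k$ as in \eqref{eq:barrier-polynomial}. Note that $\psi_k$ uses $a^3+\bar a^3=2\Re(a^3)$.

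We then check the hypotheses of Lemma~\ref{lem:contact}:
\begin{itemize}
\item $d\mathcal Z_{\psi_k}=0$ at $(p,q)$, from the minimum;
\item $D>0$, since $p\ne q$;
\item $\xi=(\psi_k)_a\ne0$, which Lemma~\ref{lem:algebra} supplies because $0<k<3/4$, $|a|<1$ and $\psi_k(a,\bar a)=Z_k(p,q)=0$.
\end{itemize}
Lemma~\ref{lem:contact} then produces directions $W_1,W_2$ with
\[
\Hess Z_k(W_1,W_1)+\Hess Z_k(W_2,W_2)
\]
equal to the right-hand side of \eqref{eq:contact-trace}. Lemma~\ref{lem:algebra} evaluates that expression as $-D\eta P_k(D)/M<0$. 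But each Hessian term is nonnegative at a minimum, which is a contradiction. Hence $k\ge3/4$, and this gives \eqref{eq:sharp-quotient}.

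Two points need care.

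\emph{Orientation.} Lemma~\ref{lem:contact} requires positively oriented frames with determinant $1$ on both factors. This holds by the construction in Lemma~\ref{lem:lift}, since $\widehat L$ is oriented by pullback.

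\emph{Smoothness of $Z_k$.} Since $G$ and $D$ are smooth on $L\times L$, the function $Z_k$ is smooth, and its Hessian is intrinsic on $L\times L$. It can therefore be computed with the local lifts, because the local expression $\psi_k(a,\bar a)$ does not depend on which normalized sheets are chosen.

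The heavy algebra is already packaged in Lemma~\ref{lem:algebra}. The main remaining step is ensuring that the minimizer is off-diagonal. This uses the continuity at the diagonal from Lemma~\ref{lem:diagonal} together with the strict inequality $k<3/4$, which is immediate.
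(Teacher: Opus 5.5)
Your proposal is correct and follows the paper's proof essentially step for step. The shared route is the minimum $k$ of the continuously extended quotient, an off-diagonal minimizer when $k<3/4$, the barrier $Z_k=G-kD^2=\mathcal Z_{\psi_k}$ vanishing there, and the contradiction between $\mathcal L Z_k\ge 0$ and the negative value obtained from Lemma~\ref{lem:contact} and Lemma~\ref{lem:algebra}. The only blemish is notational: you use $q$ both for the quotient $G/D^2$ and for the second point of the minimizing pair, which the paper avoids by writing $(x_0,y_0)$.
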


\begin{proof}
By Proposition~\ref{prop:triangle}, $G>0$ away from the diagonal,
and embeddedness gives $D>0$ there. Hence $q=G/D^2>0$ off the
diagonal. Lemma~\ref{lem:diagonal} extends $q$ continuously to
$L\times L$ by assigning the positive value $3/4$ on the diagonal.
The product is compact, so the extended function attains a minimum
$k$. Since its value at every point is positive, $k>0$.

Suppose the asserted inequality fails. Then some off-diagonal pair
has $q<3/4$, and consequently $0<k<3/4$. A minimizing pair cannot
lie on the diagonal, where the value is $3/4$. Choose such a pair
$(x_0,y_0)$ with $x_0\ne y_0$, and keep this number $k$ fixed for
the remainder of the proof.

Consider the globally smooth function
\begin{equation}\label{eq:global-Z}
 Z_k=G-kD^2.
\end{equation}
For $x\ne y$, the definition of $k$ gives
$Z_k=D^2(q-k)\geq0$. On the diagonal, $D=G=0$, so $Z_k=0$
there as well. In particular, $Z_k\geq0$ everywhere and
$Z_k(x_0,y_0)=0$. Thus
\[
 dZ_k(x_0,y_0)=0,\qquad
 \Hess Z_k(W,W)\big|_{(x_0,y_0)}\geq0
 \quad\text{for every real }W\in T_{(x_0,y_0)}(L\times L).
\]
These are the first- and second-derivative conditions for a smooth
function at a minimum.

Choose normalized local lifts near $x_0$ and $y_0$, and write
$a(x,y)$ for their Hermitian product. With the polynomial in
\eqref{eq:barrier-polynomial}, we have
\[
 Z_k(x,y)
   =\psi_k\bigl(a(x,y),\overline{a(x,y)}\bigr)
   =\mathcal Z_{\psi_k}(x,y).
\]
At the selected pair, $|a|<1$ and $\psi_k=0$. Since $0<k<3/4$,
Lemma~\ref{lem:algebra} applies and gives
$\xi=(\psi_k)_a\ne0$. The local lifts are normalized horizontal
minimal Legendrian immersions, $d\mathcal Z_{\psi_k}=dZ_k=0$,
and $D>0$. These verify all the hypotheses of
Lemma~\ref{lem:contact}.

Let $W_1,W_2$ be the two real product tangent vectors supplied by
that lemma. The minimum condition implies
\[
 \mathcal LZ_k
   =\Hess Z_k(W_1,W_1)+\Hess Z_k(W_2,W_2)\geq0.
\]
On the other hand, its contact identity is exactly the expression
evaluated in Lemma~\ref{lem:algebra}. Substitution gives
\begin{equation}\label{eq:negative-trace}
 \mathcal LZ_k
 =-\frac{D(3-4k)}{(3-4k)(3-k)+2Dk^2}P_k(D)<0.
\end{equation}
This contradicts $\mathcal LZ_k\geq0$ at $(x_0,y_0)$ and proves
\eqref{eq:sharp-quotient}.
\end{proof}

\begin{remark}[The Clifford model and the sharp constant]\label{rem:clifford-discriminant}
On the universal cover $\R^2$ of the Clifford torus, consider the normalized horizontal lift
\begin{equation}\label{eq:model-lift}
 F(s,t)=-\frac1{\sqrt3}
 \left(
 e^{i\sqrt2s},
 e^{-is/\sqrt2+i\sqrt{3/2}\,t},
 e^{-is/\sqrt2-i\sqrt{3/2}\,t}
 \right).
\end{equation}
With the standard orientation of $(s,t)$, the induced metric is $ds^2+dt^2$, and
we have
$\ip{F}{dF}=0$ and $(F,F_s,F_t)\in\SU(3)$.
Every normalized lift of a point of the Clifford torus has the form
$\zeta^mF(s,t)$ for some $m\in \{0,1,2\}$, and its coordinate product is $-1/(3\sqrt3)$.

Let $P,Q$ be normalized lifts of two distinct points of the Clifford torus and put $\lambda_j=Q_j/P_j$ for $j=0,1,2$.
Then
\[
 |\lambda_j|=1,\qquad
  \lambda_0\lambda_1\lambda_2=1,\qquad
 a=\ip{P}{Q}=\frac{\lambda_0+\lambda_1+\lambda_2}{3}.
\]
Since $\lambda_j^{-1}=\bar\lambda_j$, the pairwise products satisfy
\[
 \lambda_0\lambda_1+\lambda_0\lambda_2+\lambda_1\lambda_2
   =\bar\lambda_2+\bar\lambda_1+\bar\lambda_0=3\bar a.
\]
Thus the $\lambda_j$ are the roots of
$p(t)=t^3-3at^2+3\bar a t-1$, whose discriminant is
\[
  \operatorname{disc}p
  =-27(1-6|a|^2+8\Re(a^3)-3|a|^4)
  =-27(4G-3D^2).
\]

For roots on the unit circle,
\[
 |\lambda_i-\lambda_j|^2
   =(\lambda_i-\lambda_j)
      \left(\frac1{\lambda_i}-\frac1{\lambda_j}\right)
   =-\frac{(\lambda_i-\lambda_j)^2}{\lambda_i\lambda_j}.
\]
Multiplying over the three pairs $i<j$ gives a minus sign. The
denominator is
$(\lambda_0\lambda_1\lambda_2)^2=1$, and the numerator is the
root expression for $\operatorname{disc}p$. Therefore
\begin{equation}\label{eq:clifford-discriminant}
 27(4G-3D^2)=\prod_{i<j}|\lambda_i-\lambda_j|^2.
\end{equation}
When $D>0$, this can be written as
\[
 q-\frac34
    =\frac{1}{108D^2}\prod_{i<j}|\lambda_i-\lambda_j|^2\geq0.
\]
Equality holds precisely when two of the $\lambda_j$
coincide. All three cannot coincide, since this would give   $Q=\lambda P$, contrary to the assumption that the underlying points are distinct. 
For example, the normalized lifts 
$P=-\frac{1}{\sqrt{3}}(1,1,1)$ and $Q=\frac{1}{\sqrt{3}}(1,1,-1)$ give 
$(\lambda_0,\lambda_1,\lambda_2)=(-1,-1,1)$ and hence
\[
 a=-\frac13,\qquad D=\frac89,\qquad
 G=\frac{16}{27},\qquad q=\frac34.
\]
Their projections are distinct points
$[1:1:1]$ and $[-1:-1:1]$ of the Clifford torus.
Thus the constant in \eqref{eq:sharp-quotient} is sharp. The estimate for a general embedded surface is proved independently in Proposition~\ref{prop:sharp}.
\end{remark}

\section{Proof of Theorem \ref{thm:main}}\label{sec:rigidity}
We now combine the sharp two-point inequality with the diagonal
expansion to obtain nonnegative Gauss curvature and complete
the proof of Theorem~\ref{thm:main}.

\begin{lemma}\label{lem:nonnegative-curvature}
Every closed oriented embedded minimal Lagrangian surface in $\CP^2$
satisfies
\[
 C(v,v,v)^2\leq\frac12\qquad (|v|=1),\qquad K\geq0.
\]
\end{lemma}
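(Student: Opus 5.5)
The plan is to combine the sharp two-point inequality of Proposition~\ref{prop:sharp} with the diagonal expansion of Lemma~\ref{lem:diagonal}, and then convert the resulting pointwise bound on the cubic tensor into a curvature bound via the Gauss equation \eqref{eq:gauss}.

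First, fix $x\in L$ and a unit vector $v\in T_xL$. For small $t\neq0$ the point $\exp_x(tv)$ is distinct from $x$, so Proposition~\ref{prop:sharp} gives $q(x,\exp_x(tv))\geq\frac34$. Subtracting $\frac34$ and inserting \eqref{eq:diagonal-expansion}, we get
\[
 \frac14\left(\frac12-C_x(v,v,v)^2\right)t^2+O(t^3)\geq0 .
\]
Dividing by $t^2$ and letting $t\to0$ yields $C_x(v,v,v)^2\leq\frac12$. Lemma~\ref{lem:diagonal} guarantees the $O(t^3)$ remainder, and pointwise validity of the expansion suffices here; uniformity is not needed.

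Second, take the maximum over unit $v$. By \eqref{eq:cubic-maximum} this gives $c^2+d^2\leq\frac12$ at each point. The Gauss equation \eqref{eq:gauss} then reads $K=1-2(c^2+d^2)\geq0$.

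One bookkeeping point: $q$ is defined using normalized lifts, and for a general closed oriented embedded surface the lift is the three-sheeted covering of Lemma~\ref{lem:lift}. The expansion and the inequality concern only the well-defined functions $D$ and $G$ on $L\times L$, so no ambiguity arises. There is no real obstacle; the only point needing care is that the inequality is used off the diagonal while the limit is taken toward it. This is legitimate because $t\neq0$ throughout, and Proposition~\ref{prop:sharp} holds for all distinct pairs.
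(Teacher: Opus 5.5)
Your proposal is correct and follows essentially the same route as the paper: apply Proposition~\ref{prop:sharp} at the off-diagonal pairs $(x,\exp_x(tv))$, divide the diagonal expansion \eqref{eq:diagonal-expansion} by $t^2$ and let $t\to0$, then maximize over $v$ using \eqref{eq:cubic-maximum} and conclude with the Gauss equation \eqref{eq:gauss}. You are also right that only the pointwise expansion at a fixed $(x,v)$ is needed here; the uniformity in Lemma~\ref{lem:diagonal} matters only for extending $q$ continuously in Proposition~\ref{prop:sharp}.
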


\begin{proof}
Fix $x\in L$ and a unit vector $v\in T_xL$. For every sufficiently
small nonzero $t$, the point $\exp_x(tv)$ is distinct from $x$.
The global estimate in Proposition~\ref{prop:sharp} therefore gives
\[
 q(x,\exp_x(tv))-\frac34\geq0.
\]
By \eqref{eq:diagonal-expansion}, 
\[
 0\leq\frac{q(x,\exp_x(tv))-3/4}{t^2}
   =\frac14\left(\frac12-C_x(v,v,v)^2\right)+O(t).
\]
Letting $t\to0$ gives $C_x(v,v,v)^2\leq1/2$ for every $x\in L$ and every unit vector $v\in T_xL$. Taking the maximum over $v$ and using
\eqref{eq:cubic-maximum} yields $c^2+d^2\leq1/2$.
The Gauss equation \eqref{eq:gauss} then gives
$K=1-2(c^2+d^2)\geq0$.
\end{proof}

The following classical classification is due to Yau \cite{Yau1974} and Naitoh--Takeuchi
\cite{NaitohTakeuchi1982}; see
\cite[Theorem~1(b)]{CastroUrbano1994}.

\begin{proposition}\label{prop:flat-classification}
Let $f:T^2\to\CP^2$ be a minimal Lagrangian immersion whose induced
metric is flat. Then its image is congruent to $\TCl$.
\end{proposition}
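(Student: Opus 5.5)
The plan is to reduce to the structure equations of Lemma~\ref{lem:lift}. Flatness gives $K\equiv0$, so the Gauss equation \eqref{eq:gauss} forces $c^2+d^2\equiv\tfrac12$. The cubic tensor therefore has constant norm. I will show that it is in fact parallel, and then integrate the frame equations \eqref{eq:real-frame-equations} explicitly.

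\textbf{Step 1: holomorphic cubic differential.} Introduce a local isothermal (here flat) coordinate $z=s+it$ with orthonormal frame $e_1=\partial_s$, $e_2=\partial_t$. Using the symmetry of $\nabla C$ (Codazzi) and the trace condition \eqref{eq:cubic-trace}, check that $\phi=(c-id)\,dz^3$ is a holomorphic cubic differential. In components this says $c-id$ satisfies the Cauchy--Riemann equations in the flat frame. Since $|c-id|^2=\tfrac12$ is constant, the holomorphic function $c-id$ has constant modulus. By the open mapping theorem it is therefore constant on each flat chart.

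\textbf{Step 2: parallel frame.} Because the metric is flat, the frame $(e_1,e_2)$ can be chosen parallel on the universal cover $\R^2$. It is then globally defined up to a constant rotation. Rotating by an angle $\theta_0$ changes $c-id$ by $e^{3i\theta_0}$. I will choose the rotation so that $c=1/\sqrt2$ and $d=0$ everywhere.

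\textbf{Step 3: integrate the frame.} Equations \eqref{eq:real-frame-equations} become a constant-coefficient linear system $\mathcal F_s=\mathcal F A_1$, $\mathcal F_t=\mathcal F A_2$ with $A_1,A_2\in\mathfrak{su}(3)$ commuting. Their commutation is exactly the flatness integrability condition. Hence $\mathcal F=\mathcal F(0)\exp(sA_1+tA_2)$.

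\textbf{Step 4: identify the model.} Compare with the explicit lift \eqref{eq:model-lift} of Remark~\ref{rem:clifford-discriminant}. That lift has the same constant $(c,d)$ and the same induced frame equations. Uniqueness for the linear system then gives $F=A\,F_{\mathrm{model}}$ with $A=\mathcal F(0)\mathcal F_{\mathrm{model}}(0)^{-1}\in\SU(3)$. Projecting, $f(\R^2)\subset A(\TCl)$, and since both are closed surfaces the images coincide.

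The main obstacle is Step~1. I need the precise Codazzi identity showing that $c-id$ is holomorphic rather than antiholomorphic in the chosen orientation convention. Getting this wrong would not break the argument, since constant modulus together with either holomorphicity or antiholomorphicity gives constancy. So I would first verify only that $\partial_{\bar z}$ or $\partial_z$ of $c-id$ vanishes, using $\nabla_1C_{112}=\nabla_2C_{111}$ and the trace condition.

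A cleaner alternative to the open mapping argument is also available. Constant modulus of a holomorphic function on a connected open set forces its derivative to vanish, because $\partial_z|h|^2=\bar h\,h'=0$ with $h\neq0$. This avoids any topology beyond the connectedness of $T^2$.
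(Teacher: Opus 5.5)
Your proof is correct, and it takes a different route from the paper. The paper gives no argument for this proposition; it quotes the classical classification of Yau and Naitoh--Takeuchi, in the form of Castro--Urbano, Theorem~1(b).

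You instead give a self-contained proof inside the paper's own framework. Flatness and \eqref{eq:gauss} give $c^2+d^2\equiv\frac12$. In flat coordinates, the symmetry of $\nabla C$ together with \eqref{eq:cubic-components} reads $d_s=c_t$ and $d_t=-c_s$. These are exactly the Cauchy--Riemann equations for $c-id=2C(\partial_z,\partial_z,\partial_z)$ in $z=s+it$, so holomorphic is the right choice. Constant modulus then makes $C$ parallel. The system \eqref{eq:real-frame-equations} becomes constant-coefficient, and its compatibility $[A_1,A_2]=0$ reduces to $2c^2=1$.

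This retraces the classical chain: a holomorphic cubic differential of constant norm is parallel, and parallel Lagrangian tori are Clifford. The difference is that you replace the Naitoh--Takeuchi classification of parallel Lagrangian submanifolds by explicit integration and comparison with \eqref{eq:model-lift}.

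The cost is a few facts you assert without checking:
\begin{itemize}
\item The model lift has $(c,d)=(1/\sqrt2,0)$ in the frame $(\partial_s,\partial_t)$. Indeed $\langle F_s,F_{ss}\rangle=i/\sqrt2$, $\langle F_t,F_{ss}\rangle=0$ and $\langle F,F_{ss}\rangle=-1$.
\item A global horizontal lift exists on the universal cover $\R^2$. You can get it by lifting the covering map through $\widehat L\to L$ from Lemma~\ref{lem:lift}, or from flatness of the pulled-back Hopf connection on a simply connected domain.
\end{itemize}
The determinant normalization is not actually needed. Any $A\in U(3)$ with $F=AF_{\mathrm{model}}$ projects to an element of $\PU(3)$.

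Your route buys independence from the literature and an explicit congruence matrix. The citation buys brevity and a more general classification.
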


\begin{proof}[Proof of Theorem~\ref{thm:main}]
Choose an orientation of the given torus $T^2$.
Lemma~\ref{lem:nonnegative-curvature} gives $K\geq0$. The Gauss--Bonnet theorem and $\chi(T^2)=0$ imply
\[
 \int_{T^2}K\,dA=2\pi\chi(T^2)=0.
\]
Hence $K\equiv 0$, and the induced metric is flat.
Proposition~\ref{prop:flat-classification} now supplies an element
$A\in\PU(3)$ with $f(T^2)=A(\TCl)$, as asserted.
\end{proof}

\par\vspace{2mm}
\noindent\textbf{Acknowledgements.}
We thank Professor Ben Andrews for valuable discussions with the second author
in 2012 on a two-point function approach to this problem.
The first author thanks Professor Guofang Wang for introducing
him to this problem and providing guidance during the first
author's doctoral studies.
This work was supported by the National Natural Science Foundation
of China (Grant Nos. 12671062, 12201138, 12471048, W2521103)
and by the Natural Science Foundation of Henan Province
(Grant No. 262300421869).

\vskip 2mm
\noindent\textbf{AI Disclosure.}
The authors developed and independently verified the final proof.
ChatGPT assisted in the search for the two-point function
$Z_k=G-kD^2$ in \eqref{eq:global-Z}, and AI-assisted tools
were used for additional checks of calculations and exposition.
The authors take full responsibility for the mathematical
content and references.

\end{document}